\renewcommand{\d}{\partial}
\newcommand{\ii}{\sqrt{-1}}
\newcommand{\wt}[1]{\widetilde{#1}}
\newcommand{\wh}[1]{\widehat{#1}}
\newcommand{\veps}{\varepsilon}
\newcommand{\vphi}{\varphi}
\newcommand{\al}{\alpha}
\newcommand{\la}{\lambda}
\newcommand{\om}{\omega}
\newcommand{\te}{\theta}
\newcommand{\Om}{\Omega}
\newcommand{\De}{\Delta}
\newcommand{\na}{\nabla}
\newcommand{\cA}{\mathcal{A}}
\newcommand{\cB}{\mathcal{B}}
\newcommand{\cE}{\mathcal{E}}
\newcommand{\cS}{\mathcal{S}}
\newcommand{\supp}{\mbox{supp }}
\newcommand{\bR}{\mathbb{R}}
\newcommand{\bC}{\mathbb{C}}
\newcommand{\rB}{{\bf B}}
\newcommand{\ul}[1]{\underline{#1}}
\newtheorem{thm}{Theorem}
\newtheorem{prop}[thm]{Proposition}
\newtheorem{lem}[thm]{Lemma}
\newtheorem{cor}[thm]{Corollary}
\theoremstyle{definition}
\newtheorem{remark}[thm]{Remark}
\numberwithin{thm}{section}
\numberwithin{equation}{section}
\renewcommand{\[}{\begin{equation}}
\renewcommand{\]}{\end{equation}}
\newcommand{\wed}{\wedge}
\newcommand{\ov}[1]{\overline{#1}}
\title[Monge-Amp\`ere type equations on Hermitian manifold with boundary]{Weak solutions to Monge-Amp\`ere type equations on compact Hermitian manifold with boundary}
\author{S\l awomir Ko\l odziej and Ngoc Cuong Nguyen} 
\address{Faculty of Mathematics and Computer Science, Jagiellonian University, \L ojasiewicza 6, 30-348 Krak\'ow, Poland}
\email{slawomir.kolodziej@im.uj.edu.pl}
\address{Department of Mathematical Sciences, KAIST, 291 Daehak-ro, Yuseong-gu, Daejeon 34141, South Korea}
\email{cuongnn@kaist.ac.kr}
\begin{document}
\maketitle

%\begin{center}
%\begin{flushright}
\centerline{ {\em  Dedicated to the memory of Nessim Sibony} }
%\end{flushright}
%\end{center}

\bigskip
\bigskip

\begin{abstract} We prove the bounded subsolution theorem for the complex Monge-Amp\`ere type equation, with the right hand side being a positive Radon measure, on a compact Hermitian manifold with boundary. 
\end{abstract}

\section{Introduction}

Let $(\ov M,\om)$ be a smooth  compact $n$-dimensional Hermitian manifold with the non-empty boundary $\d M$. In \cite{KN21} we studied weak quasi-plurisubharmonic solutions of the Dirichlet problem for the complex Monge-Amp\`ere equation.  In this paper we extend those results to  the complex Monge-Amp\`ere type equation where the right hand side depends also on the solution.

Let $\mu$ be a positive Radon measure on $M = \ov M \setminus \d M$. Suppose that $F(u,z): \bR \times  M \to \bR^+$ is  a non-negative function.
 %which is continuous and non-decreasing in the first variable and $\mu$-measurable in the other one. 
Let $\vphi \in C^0(\d M)$. We consider the Dirichlet problem
\[\label{eq:DP}\begin{cases}
	u\in PSH(M,\om) \cap L^\infty(\ov M), \\
	(\om + dd^c u)^n = F(u,z) \mu,\\
	 u = \vphi \quad\text{on } \d M.
\end{cases}\]
When $M$ is a bounded strictly pseudoconvex domain in $\bC^n$ the problem for plurisubharmonic functions ($\om$ is just the zero form) was studied  by Bedford and Taylor \cite{BT77} for $\mu = dV_{2n}$ the Lebesgue measure, $F \in C^0(\bR \times \ov M)$ and $F^{1/n}$  convex and nondecreasing in $u$. Generalizations for weak solutions were done by Cegrell and Ko\l odziej (\cite{Ce84}, \cite{Ko00}, \cite{CK06}) with more general function $F(u,z)$ and a positive Radon measures $\mu$. Classical smooth solutions were obtained in  \cite{CKNS85}.  
On the product of a compact K\"ahler manifold and an annulus this problem, with $F \equiv 0$, is the geodesic equation on the space of K\"ahler potentials of the manifold. For $F(u,z) = e^{\la u}$ with $\la \in \bR$ one obtains  the K\"ahler-Einstein metric equations as in Cheng and Yau \cite{CY80}, where the authors
obtained complete  K\"ahler-Einstein metrics, but then the potentials tend to infinity when the argument approaches the boundary.
 Berman \cite{Be19} discovered that one may use the solution of a family of Monge-Amp\`ere type equation to  approximate various envelopes of quasi-plurisubharmonic functions. This approximation process is very useful in studying regularity of  global envelopes (see  \cite{CZ19}, \cite{GLZ19}, \cite{To19}).

On a compact Hermitian manifold with boundary, if  $F(u,z) = 1$ and $\mu$ is a positive Radon measure we obtained weak solutions of the problem under the hypothesis that a subsolutions exists, \cite{KN21}. To  deal with a general (non-K\"ahler) Hermitian metric $\om$ we needed to adapt a suitable comparison principle from \cite{KN1}. Furthermore, we can no longer rely on  assumption that  the boundary is pseudoconvex. We bypassed this by employing the Perron envelope and the existence of a bounded subsolution. Here we use similar strategy to obtain the following result.
\medskip

\noindent
{\bf Theorem.} (c.f. Theorem~\ref{thm:main})  {\em Assume that $F(u,z)$ is a bounded nonnegative function which is continuous and non-decreasing in the first variable and $\mu$-measurable in the other one.  Let $\mu$ be a positive Radon measure which is locally dominated by Monge-Amp\`ere measures of bounded plurisubharmonic functions. Then the Dirichlet problem \eqref{eq:DP} has a solution if and only if there is a bounded subsolution $\ul u \in PSH(M,\om) \cap L^\infty(\ov M)$ satisfying $\lim_{z\to x} u (z) = \vphi(x)$ for every $x\in \d M$ and \[\notag \label{eq:intro-subsol}(\om+dd^c \ul u)^n \geq F(\ul u, z) \mu \quad\text{on } M.\]
}

In many  cases of interest the extra assumption on $\mu$ (the local domination by Monge-Amp\`ere measures of bounded plurisubharmonic functions)   is always satisfied once the subsolution $\ul u$ exists, for example  $F(t,z) = e^{\la t}$ with $\la \in \bR$ or $\mu = \om^n$. If  $F(t,z) = e^{\la t}$ with $\la>0$, then we also have the uniqueness of the solution (Corollary~\ref{cor:KE}). 

\medskip

{\em Organization.}
In Section~\ref{sec:MA-local} we study the problem (\ref{eq:DP})  in a bounded strictly pseudoconvex domain in $\bC^n$. We give a partial generalization of a result in \cite{CK06} and study the convergence in capacity in dimension $n=2$. In Section~\ref{sec:DP} we prove the main theorem. We also give  a sketch of the proof of
 H\"older continuity of solutions of the Laplace equation on compact Hermitian manifold with boundary in the appendix.

\medskip

{\em Acknowledgement.} The first author is partially supported by  grant  no. \linebreak 2021/41/B/ST1/01632 from the National Science Center, Poland. The second author is  partially supported by the start-up grant G04190056 of KAIST and the National Research Foundation of Korea (NRF) grant  no. 2021R1F1A1048185.

\section{Monge-Amp\`ere type equations and stability of solutions}
\label{sec:MA-local}

In this section we first study the problem~\eqref{eq:DP} in the special case $M \equiv \Om$ a bounded strictly pseudoconvex domain in $\bC^n$. This will provide the construction of the lift of quasi-plurisubharmonic function in the Perron envelope method for the manifold case.

Let us recall the following version of the comparison principle for a background Hermitian metric, see \cite[Theorem~3.1]{KN1}.

\begin{lem} \label{lem:local-CP} Let $\Om$ be a bounded open set in $\bC^n$. Fix $0<\te<1$. Let $u,v \in PSH(\Om, \om) \cap L^\infty(\Om)$ be such that $\liminf_{z\to \d \Om} (u-v) \geq 0$. Suppose that $-s_0 = \sup_{\Om}(v-u) >0$ and $\om + dd^c v \geq \te \om$ in $\Om$. Then, for any $0< s < \te_0:= \min \{\frac{\te^n}{16 {\bf B}}, |s_0| \}$, 
$$
	\int_{\{u< v + s_0 +s\}} (\om + dd^c v)^n  \leq \left( 1+ \frac{s{\bf B}}{\te^n} C_n \right) \int_{\{u<v+s_0 +s\}} (\om+ dd^c v)^n,
$$ 
where $C_n$ is a dimensional constant and  ${\bf B}>0$ is a constant such that on $\ov \Om$, 
$$	-\rB \om^2 \leq 2n dd^c \om \leq \rB \om^2, \quad -\rB \om^3 \leq 4n^2 d\om \wed d^c \om \leq \rB \om^3.
$$
\end{lem}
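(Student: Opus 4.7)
The plan is to prove this Hermitian comparison principle along the standard lines: localize to the contact set $U(s) := \{u < v + s_0 + s\}$, expand $(\omega+dd^c v)^n - (\omega+dd^c u)^n$ telescopically, integrate by parts twice, and absorb the Hermitian error terms into the left-hand side using the strict positivity margin $\theta$. (I read the inequality as comparing the two different Monge--Amp\`ere measures, with the right-hand side being $\int_{\{u<v+s_0+s\}}(\omega+dd^c u)^n$.)

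\emph{Localization.} From $\liminf_{\partial\Omega}(u-v) \geq 0$ and $\sup_\Omega(v-u) = -s_0 > 0$, the set $U(s)$ is non-empty and relatively compact in $\Omega$ for every $0 < s < |s_0|$. Replace $u$ by $\tilde u := \max(u, v + s_0 + s)$. Then $\tilde u = v + s_0 + s$ on a neighborhood of $\partial U(s)$, which kills the boundary contributions in Stokes' theorem, and $(\omega + dd^c \tilde u)^n = (\omega + dd^c u)^n$ on $U(s)$ by the locality of the Bedford--Taylor operator.

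\emph{Telescoping and integration by parts.} Write
\begin{equation*}
(\omega + dd^c v)^n - (\omega + dd^c \tilde u)^n = dd^c(v - \tilde u) \wedge \sum_{k=0}^{n-1}(\omega+dd^c v)^k \wedge (\omega+dd^c \tilde u)^{n-1-k},
\end{equation*}
integrate over $U(s)$, and integrate by parts twice to move the $dd^c$ off $(v-\tilde u)$. In the K\"ahler case ($d\omega = 0$) this reproduces the classical comparison principle. In the Hermitian setting each integration by parts generates additional terms of the schematic form
\begin{equation*}
\int_{U(s)} (v + s_0 + s - \tilde u)\, \eta \wedge T,
\end{equation*}
where $\eta$ is a real current built from $d\omega$, $d^c\omega$, $dd^c\omega$ and $T$ is a wedge product of copies of $\omega + dd^c v$, $\omega + dd^c \tilde u$, $\omega$. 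The bounds $0 \leq v + s_0 + s - \tilde u \leq s$ on $U(s)$ and the curvature inequalities $|dd^c\omega| \leq \mathbf{B}\omega^2/(2n)$, $|d\omega\wedge d^c\omega| \leq \mathbf{B}\omega^3/(4n^2)$ then control these error terms by $C_n\, s\, \mathbf{B} \int_{U(s)} \omega \wedge T'$ for some positive mixed $(n-1,n-1)$-current $T'$.

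\emph{Absorption.} Since $\omega + dd^c v \geq \theta\omega$, every stray factor of $\omega$ on the right can be replaced by $\theta^{-1}(\omega + dd^c v)$; pushing this through all $n$ slots of $\omega \wedge T'$ accounts for the factor $\theta^{-n}$. Collecting everything and using $\tilde u = u$ on $U(s)$ yields the claimed inequality, and the smallness assumption $s < \theta^n/(16\mathbf{B})$ ensures the absorbed term stays below $1$, so that the resulting geometric series collapses to the bound $1 + C_n s\mathbf{B}/\theta^n$. The main technical obstacle is the Stokes bookkeeping in the second step: one must expand each summand $dd^c(v-\tilde u)\wedge(\omega+dd^c v)^k\wedge(\omega+dd^c \tilde u)^{n-1-k}$, carry out the two integrations by parts, and verify that every leftover term is a bounded scalar of size $O(s)$ times a nonnegative current controlled by $\mathbf{B}\cdot \omega\wedge T'$; this is where the constants $\mathbf{B}$ and $C_n$ get pinned down.
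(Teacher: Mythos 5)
The paper does not actually prove this lemma; it is recalled verbatim from \cite[Theorem~3.1]{KN1}, and the statement as reproduced here contains a typo (both integrands read $(\om+dd^c v)^n$). You correctly inferred that the right-hand integrand should be $(\om+dd^c u)^n$ -- this is confirmed by how the lemma is applied in the proof of Proposition~\ref{prop:uniqueness} -- and your overall plan (localize to $U(s)$, telescope the difference of Monge--Amp\`ere measures, integrate by parts, control the Hermitian defect by $\rB$, absorb via $\om+dd^c v\ge\te\om$) is the right strategy and matches the cited reference.

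There is, however, a genuine error in your localization/telescoping step. With $\tilde u := \max(u, v+s_0+s)$, on $U(s)=\{u<v+s_0+s\}$ you in fact have $\tilde u = v+s_0+s$, so $(\om+dd^c\tilde u)^n = (\om+dd^c v)^n$ there -- not $(\om+dd^c u)^n$ as you assert; likewise $\tilde u$ equals $u$ (not $v+s_0+s$) on the outer side of $\partial U(s)$, and what really kills boundary terms is that $\tilde u = u$ near $\partial\Om$. More seriously, the quantity you set out to telescope, $\int_{U(s)}\bigl[(\om+dd^c v)^n-(\om+dd^c\tilde u)^n\bigr]$, is identically zero, since $v-\tilde u=-(s_0+s)$ is constant on $U(s)$; the proposed integrations by parts would only verify $0=0$. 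The expression that actually needs to be expanded and estimated is
\begin{equation*}
\int_\Om\bigl[(\om+dd^c\tilde u)^n-(\om+dd^c u)^n\bigr]
= dd^c(\tilde u-u)\wedge\sum_{k=0}^{n-1}(\om+dd^c\tilde u)^k\wedge(\om+dd^c u)^{n-1-k}
\end{equation*}
integrated over all of $\Om$, using that $\tilde u-u=(v+s_0+s-u)_+\in[0,s]$ is supported in $\ov{U(s)}$ and vanishes near $\partial\Om$. Since $\om_{\tilde u}^n=\om_v^n$ on $U(s)$ and $\tilde u=u$ off $\ov{U(s)}$, the left side is precisely $\int_{U(s)}\om_v^n-\int_{U(s)}\om_u^n$ (modulo mass on the level set, handled by the usual $s'\uparrow s$ limit). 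The absorption step -- trading stray factors of $\om$ for $\te^{-1}(\om+dd^c v)$ and using $s<\te^n/(16\rB)$ -- is correctly conceived, but everything before it must be reorganized around this corrected identity.
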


It gives a useful comparison principle for solutions of Monge-Amp\`ere type equations. We shall use it frequently in the paper.

\begin{prop} \label{prop:uniqueness} Let $\Om$ be a bounded open set in $\bC^n$. Let $u, v \in PSH(\Om, \om) \cap L^\infty(\Om)$ be such that $\liminf_{z\to \d\Om} (u-v)(z) \geq 0$.  Let $F(t,z): \bR \times \Om \to \bR^+$ be a non-negative function which is non-decreasing in $t$ and $d\mu$-measurable in $z$. Suppose $\mu \leq \nu$ as measures and
$$
	(\om + dd^c u)^n = F(u,z) \mu, \quad (\om + dd^c v)^n = F(v, z) \nu
$$
in $\Om$. Then, $u \geq v$ on $\Om$.
\end{prop}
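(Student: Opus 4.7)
The plan is a proof by contradiction based on the Hermitian comparison principle in Lemma~\ref{lem:local-CP}, in the spirit of the classical Cegrell--Ko\l odziej argument. Suppose for contradiction that $\sup_\Omega(v-u)>0$. Because Lemma~\ref{lem:local-CP} requires a strictly $\omega$-plurisubharmonic competitor, I would first regularize $v$: since $\Omega\Subset\bC^n$, fix a bounded smooth $\psi\in PSH(\Omega,\omega)$ with $\omega+dd^c\psi\geq\theta_0\,\omega$ and $\psi\leq 0$ (e.g.\ $\psi=A|z|^2-B$ after normalization), and set
$$
v_\epsilon:=(1-\epsilon)v+\epsilon\psi-K\epsilon,
$$
where $K$ is chosen so that $\liminf_{z\to\partial\Omega}(u-v_\epsilon)\geq 0$ is preserved. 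Then $v_\epsilon\leq v$, $\omega+dd^c v_\epsilon\geq\epsilon\theta_0\,\omega$, and $v_\epsilon\to v$ uniformly as $\epsilon\to 0$, so $-s_0^\epsilon:=\sup_\Omega(v_\epsilon-u)>0$ for all sufficiently small $\epsilon$.

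Next, I would apply Lemma~\ref{lem:local-CP} with $\theta=\epsilon\theta_0$ to the pair $(u,v_\epsilon)$: for $0<s<\min\{(\epsilon\theta_0)^n/(16\rB),\,|s_0^\epsilon|\}$,
$$
\int_{A_s}(\omega+dd^c v_\epsilon)^n\leq\Bigl(1+\tfrac{s\,\rB\,C_n}{(\epsilon\theta_0)^n}\Bigr)\int_{A_s}(\omega+dd^c u)^n,\qquad A_s:=\{u<v_\epsilon+s_0^\epsilon+s\}.
$$
The decisive observation is that on $A_s$ one has $u<v_\epsilon+s_0^\epsilon+s\leq v$ (because $v_\epsilon\leq v$ and $s_0^\epsilon+s<0$ for $s<|s_0^\epsilon|$), so by monotonicity of $F$ and the assumption $\mu\leq\nu$,
$$
(\omega+dd^c u)^n=F(u,z)\,d\mu\leq F(v,z)\,d\nu=(\omega+dd^c v)^n\quad\text{on }A_s.
$$

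To close the argument I would use the sharper binomial lower bound
$$
(\omega+dd^c v_\epsilon)^n\geq(1-\epsilon)^n(\omega+dd^c v)^n+n\epsilon\theta_0(1-\epsilon)^{n-1}\,\omega\wedge(\omega+dd^c v)^{n-1},
$$
noting that $\omega\wedge(\omega+dd^c v)^{n-1}\geq\omega^n$ carries definite positive mass on the nonempty open set $A_s$. Substituting into the comparison inequality and rearranging gives
$$
n\epsilon\theta_0(1-\epsilon)^{n-1}\int_{A_s}\omega^n\leq\Bigl[\tfrac{s\,\rB\,C_n}{(\epsilon\theta_0)^n}+1-(1-\epsilon)^n\Bigr]\int_{A_s}(\omega+dd^c v)^n.
$$
The main obstacle is to make the right-hand side strictly smaller than the left: one first sends $s\to 0$ with $s\ll\epsilon^{n+1}$ to kill the Hermitian error term, and then replays the whole argument on the ``plateau'' level sets $\{u<v-\delta\}$ in order to promote the resulting measure equality $F(u,z)\,d\mu=F(v,z)\,d\nu$ on $A_s$ into a pointwise contradiction, using the uniform bound $\int_{A_s}(\omega+dd^c v)^n\leq\|F\|_\infty\,\nu(\Omega)$ against the strictly positive lower bound on $\int_{A_s}\omega^n$. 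The subtlety, inherited from the non-K\"ahler setting, is precisely this parameter balance between $s$, $\epsilon$ and $\delta$; without the strict $\omega$-positivity gained from the perturbation $\psi$, the naive comparison degenerates to a trivial inequality.
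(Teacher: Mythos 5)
Your overall strategy is the right one (regularize $v$ to gain a strict $\omega$-lower bound, invoke Lemma~\ref{lem:local-CP}, and use $u<v$ on the sublevel set to apply monotonicity of $F$), but the particular regularization you chose cannot close the argument, and there is a false inequality at the decisive step.

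The convex combination $v_\epsilon=(1-\epsilon)v+\epsilon\psi-K\epsilon$ yields
$\omega+dd^c v_\epsilon=(1-\epsilon)\omega_v+\epsilon(\omega+dd^c\psi)$, and hence the binomial lower bound you write, with leading coefficient $(1-\epsilon)^n<1$ on $\omega_v^n$. You try to recover the lost $1-(1-\epsilon)^n$ from the cross term $n\epsilon\theta_0(1-\epsilon)^{n-1}\omega\wedge\omega_v^{n-1}$, asserting $\omega\wedge\omega_v^{n-1}\geq\omega^n$. This inequality is simply false: $\omega_v=\omega+dd^c v\geq 0$ can degenerate, and already for $v$ with $\omega_v=\tfrac12\omega$ one has $\omega\wedge\omega_v^{n-1}=2^{1-n}\omega^n<\omega^n$; worse, $\omega\wedge\omega_v^{n-1}$ can vanish on open sets. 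Even granting a lower bound by $c\,\omega^n$, the rearranged inequality
$n\epsilon\theta_0(1-\epsilon)^{n-1}\int_{A_s}\omega^n\leq\bigl[sC/(\epsilon\theta_0)^n+1-(1-\epsilon)^n\bigr]\int_{A_s}\omega_v^n$
gives no contradiction, because the right-hand side has a term of order $n\epsilon\int_{A_s}\omega_v^n$ and there is no control of $\omega_v^n$ by $\omega^n$ (here $\omega_v^n=F(v,z)\nu$ with $\nu$ a general Radon measure). The parameter juggling $s\ll\epsilon^{n+1}$ only removes the $s$-term; it does nothing about the deficit coming from $(1-\epsilon)^n$, and the closing sentence about ``replaying the argument on plateau sets'' is not a proof step. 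In short, a convex combination shrinks $\omega_v^n$ rather than amplifying it, and the cross term is too weak to compensate.

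The paper avoids this by normalizing $u,v\leq 0$ and taking the \emph{super-linear} perturbation $\widehat v=(1+\tau)^{1/n}v+\rho$ with $\rho\leq 0$ strictly plurisubharmonic and $dd^c\rho\geq 2\theta\omega$, $1+\theta\geq(1+\tau)^{1/n}$. Because $v\leq 0$, one still has $\widehat v\leq v$ (so $u<v$ on the sublevel set, giving $\omega_u^n\leq\omega_v^n$), but now
$\omega_{\widehat v}=\omega+dd^c\rho+(1+\tau)^{1/n}dd^c v\geq(1+2\theta)\omega+(1+\tau)^{1/n}dd^c v\geq(1+\tau)^{1/n}\omega_v$,
hence $\omega_{\widehat v}^n\geq(1+\tau)\omega_v^n$ with a genuine multiplicative gain $>1$, together with $\omega_{\widehat v}\geq\theta\omega$ so that $\int_{U(s)}\omega_{\widehat v}^n>0$ on the nonempty open sublevel set. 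Feeding this into Lemma~\ref{lem:local-CP} and cancelling the strictly positive $\int_{U(s)}\omega_v^n$ gives $1+\tau\leq 1+sC/\theta^n$, a contradiction for small $s$, with $\tau,\theta$ fixed and \emph{independent} of $s$. That fixed gap $(1+\tau)$ versus the $s$-dependent error is exactly what your expansion fails to produce. To repair your proof, replace the convex combination by the multiplicative scaling $\widehat v=(1+\tau)^{1/n}v+\rho$ after arranging $v\leq 0$, as in the paper.
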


\begin{proof} We may additionally assume that $\liminf_{z\to \d\Om} (u-v) \geq 2a>0$. The general case follows after replacing $u$ by $u+2a$ and letting $a \to 0$. Thus $\{u< v\} \subset \Om' \subset \subset \Om$ for some open set $\Om'$. By subtracting from $u, v$ a constant $C$ and replacing $F(t,z)$  by $ F(t+C, z)$ we may also assume that $u, v \leq 0$.  Arguing by contradiction, suppose that $\{u<v\}$ was non-empty.  Since $\Om$ is bounded, there exists a bounded, strictly plurisubharmonic function $\rho \in C^2(\ov \Om)$ such that $-C \leq \rho \leq 0$. Then one can first multiply $\rho$ by small positive constant and then fix small positive constants $\te, \tau>0$ such that 
 the set $\{u< (1+\tau)^\frac{1}{n} v +  \rho \} \subset \subset \Om$ is nonempty and
$$
	dd^c \rho \geq 2\te \om, \quad 1+ \te \geq (1+\tau)^\frac{1}{n} .
$$
Put $\wh v = (1+\tau)^\frac{1}{n}v+ \rho$. Then, 
\[\label{eq:lower-bound} \begin{aligned}
\om_{\wh v}^n 
&= \left(\om + dd^c \rho + dd^c (1+\tau)^\frac{1}{n} v \right)^n \\
&\geq \left[(1+2\theta )\om +  dd^c (1+\tau)^\frac{1}{n} v \right]^n\\
& \geq (1+\tau) \om_v^n.
\end{aligned}\]
Denote by $U(s) $ the set $\{u< \wh v + s_0 +s\}$ where $-s_0 = \sup_{\Om} (\wh v -u) >0$. Then, for $0<s<|s_0|$, 
$$
	U(s) \subset \subset \Om, \quad  \sup_{U(s)} \{\wh v +s_0 +s - u\} =s.
$$
It follows from Lemma~\ref{lem:local-CP} that  for every $0< s < \min\{\frac{\te^n}{16 {\bf B}}, |s_0|\}$,
$$
	0<\int_{U(s)}  \om_{\wh v}^n \leq \left( 1+ \frac{s {\bf B}}{\te^n} C_n\right) \int_{U(s)} \om_u^n,
$$
where the first inequality holds because $\om + dd^c \wh v \geq \te\om$.
Note that $u < \wh v$ on $U(s)$. Hence, the monotonicity of $F$ implies
$$\om_u^n = F(u, z) \mu \leq F(u, z) \nu \leq F(\wh v,z) \nu \leq F(v, z) \nu = \om_v^n.$$
Combining this and \eqref{eq:lower-bound} we get
$$
	(1+\tau) \int_{U(s)} \om_v^n \leq \left( 1+ \frac{s {\bf B}}{\te^n} C_n\right) \int_{U(s)} \om_v^n.
$$
Therefore, $0< \tau \leq s{\bf B} C_n/\te^n$, which is impossible for $s>0$ small enough. Hence, $u \geq v$ on $\Om$ and the proof is completed.
\end{proof}

Let $\Om$ be a bounded strictly pseudoconvex domain in $\bC^n$. Let $\vphi \in C^0(\d \Om)$ and $\mu$ be a positive Radon measure in $\Om$.  Suppose that $F(u,z): \bR \times \Om \to \bR^+$ is non-negative function which is continuous and non-decreasing in the first variable and $\mu$-measurable in the other one. %We extend the results in \cite{KN21} to more general Monge-Amp\`ere type equations. 
Consider the following Dirichlet problem
\[\label{eq:local-D}
\begin{cases}
	u \in PSH(\Om, \om) \cap L^\infty(\ov \Om), \\
	(\om + dd^c u)^n = F(u,z) \mu, \\
	\lim_{z\to x} u(z) = \vphi(x)\quad\text{for } x\in \Om.
\end{cases}
\]
By Proposition~\ref{prop:uniqueness} there is at most one solution to the problem. Furthermore, in the special case $\mu\equiv 0$, by \cite[Theorem~4.2]{KN1} there is a unique continuous (maximal) $\om$-plurisubharmonic ($\om$-psh for short) function $h$ solving
\[\label{eq:maximal}
	(\om + dd^c h)^n \equiv 0, \quad h = \vphi \quad\text{on } \d \Om.
\]

We now study the existence of the solution. The Cegrell class $\cE_0(\Om)$ is the set of all functions $v \in PSH(\Om) \cap L^\infty(\Om)$ satisfying
$$
	\lim_{z\to \d\Om} v(z)= 0 \quad\text{and}\quad
	\int_\Om (dd^cv)^n <+\infty.
$$
The following generalizes \cite[Theorem~1.1]{Ko00}, where the case $\om =0$ was treated.

\begin{thm}\label{thm:existence-local} Suppose that $F(t,z)$ is a bounded non-negative function which is continuous and non-decreasing in the first variable and $\mu$-measurable in the second one. Suppose that $d\mu \leq (dd^c v)^n$ for a function $v\in \cE_0(\Om)$. Then, there exists a unique bounded $\om$-psh function in $\Om$ solving 
$$\begin{aligned}
&	(\om + dd^c u)^n = F(u, z) d\mu, \\
&	\lim_{z\to x} u(z) = \vphi(x) \quad\text{for } x \in \d\Om.
\end{aligned}$$
\end{thm}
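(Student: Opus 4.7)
The plan is to reduce the Dirichlet problem to the case in which the right-hand side is independent of the unknown, and then solve the full problem by a Schauder fixed-point argument on the order interval between natural barriers. The key preliminary step is the following auxiliary existence: for every bounded $\mu$-measurable $G\colon\Om\to[0,\infty)$, the Dirichlet problem $(\om+dd^cw)^n=G\,d\mu$ with $w|_{\d\Om}=\vphi$ admits a unique bounded $\om$-psh solution. Uniqueness is Proposition~\ref{prop:uniqueness}; for existence I would specialise the Hermitian Dirichlet machinery of \cite{KN21} to the bounded strictly pseudoconvex domain $\Om$, exploiting the domination $G\,d\mu\le\|G\|_\infty(dd^cv)^n$ with $v\in\cE_0(\Om)$ to supply the required subsolution hypothesis. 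For merely bounded measurable $G$ one first handles nonnegative continuous $G$ and then approximates $G$ from below by continuous functions, passing to the limit via a uniform $L^\infty$-estimate of Ko\l odziej type.

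With this in hand, set $M:=\|F\|_\infty$ and let $h$ be the maximal $\om$-psh extension of $\vphi$ given by \eqref{eq:maximal}. Applying the auxiliary step with $G\equiv M$ produces $\ul u$ solving $(\om+dd^c\ul u)^n=M\,d\mu$ with boundary value $\vphi$, and the maximality of $h$ forces $\ul u\le h$. For each $w\in\cK:=\{w\in PSH(\Om,\om)\cap L^\infty(\ov\Om):\ul u\le w\le h\}$ define $T(w)$ to be the unique bounded $\om$-psh solution of $(\om+dd^cT(w))^n=F(w,z)\,d\mu$ with boundary value $\vphi$, again produced by the auxiliary step; the integrand $z\mapsto F(w(z),z)$ is $\mu$-measurable because $F$ is Carath\'eodory. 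The maximality of $h$ immediately gives $T(w)\le h$, while the lower bound $\ul u\le T(w)$ follows from the comparison argument in the proof of Proposition~\ref{prop:uniqueness} applied to the pointwise inequality $\om_{T(w)}^n=F(w,z)\,d\mu\le M\,d\mu=\om_{\ul u}^n$. Hence $T$ maps the convex set $\cK$ into itself.

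Schauder's theorem now applies: $\cK$ is $L^1(\Om)$-compact (uniform bounds on $\om$-psh functions) and $T$ is $L^1$-continuous. Indeed, given $w_k\to w$ in $L^1(\Om)$, a subsequence converges $\mu$-a.e., so continuity of $F$ in the first variable together with bounded convergence yields $F(w_k,z)\,d\mu\rightharpoonup F(w,z)\,d\mu$, and a Hermitian analogue of Ko\l odziej's $L^\infty$-stability estimate propagates this to $T(w_k)\to T(w)$ in $L^\infty$. A fixed point $u=T(u)$ is then the desired solution of \eqref{eq:local-D}, and uniqueness is Proposition~\ref{prop:uniqueness}.

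The principal obstacles are the two quantitative inputs required by the auxiliary step: the uniform $L^\infty$-bound for solutions with bounded measurable right-hand side dominated by $(dd^cv)^n$, and the $L^\infty$-stability used to make $T$ continuous. Both are Hermitian analogues of classical Ko\l odziej results from the flat case, and have to be reworked in the presence of the non-closed terms coming from $d\om$ and $d^c\om$ via Lemma~\ref{lem:local-CP}; once they are in place, the fixed-point scheme proceeds essentially as in \cite{CK06}.
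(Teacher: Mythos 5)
Your skeleton (Schauder's fixed point theorem on the order interval between a subsolution and the maximal function $h$, with uniqueness from Proposition~\ref{prop:uniqueness}) is the same as the paper's, which follows \cite{Ko00}. But the crux of the argument is the $L^1$-continuity of the map $T$, and there you lean on two claims that do not hold at this level of generality. First, from $w_k\to w$ in $L^1(\Om)$ (Lebesgue measure) you cannot extract a subsequence converging $\mu$-a.e.: $\mu$ may be singular with respect to Lebesgue measure, and a sequence of $\om$-psh functions converging in $L^1(\Om)$ can fail to converge on a Lebesgue-null set that $\mu$ charges. The paper closes this by using the domination $\mu\le (dd^cv)^n$ (hence $\mu$ is dominated by capacity) together with \cite[Lemma~2.1, Corollary~2.2]{KN21} to upgrade $L^1(\Om)$-convergence to $L^1(d\mu)$-convergence, and only then passes to a $\mu$-a.e.\ convergent subsequence (Lemma~\ref{lem:RHS-convergence}). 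Second, and more seriously, the ``Hermitian analogue of Ko\l odziej's $L^\infty$-stability estimate'' you invoke to get $T(w_k)\to T(w)$ does not exist under the hypothesis $v\in\cE_0(\Om)$ bounded: such estimates require a quantitative domination of $\mu$ by capacity (e.g.\ $\mu(E)\le A\,cap(E)^{1+\veps}$), whereas here the solutions need not even be continuous. The paper's own Theorem~\ref{thm:stability} illustrates how little stability is available in this setting --- it only yields weak convergence of the Monge--Amp\`ere measures of an $L^1$-limit, and convergence in capacity is obtained only for $n=2$ (Corollary~\ref{cor:capacity-convergence}).

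The correct replacement, which is qualitative rather than quantitative, is the monotone sandwich: set $\wh u_k=\inf_{j\le k}w_j$ and $\wt u_k=(\sup_{j\ge k}w_j)^*$, let $\wh w_k$, $\wt w_k$ be the corresponding solutions; since $F$ is non-decreasing in $t$, the right-hand sides $F(\wh u_k,z)\,d\mu$ and $F(\wt u_k,z)\,d\mu$ are monotone in $k$, so by the comparison principle \cite[Corollary~3.4]{KN1} the sequences $\wh w_k$ and $\wt w_k$ are monotone and squeeze $T(w_k)$ from above and below; the Bedford--Taylor/Dinew--Ko\l odziej convergence theorems plus Lemma~\ref{lem:RHS-convergence} identify both limits with $T(w)$ via uniqueness. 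Without this (or a genuine substitute), your continuity argument for $T$ is a gap. Two smaller points: the paper first reduces to $\mu$ with compact support in $\Om$ by an exhaustion argument, which you should either reproduce or explain why your auxiliary existence step works without it; and your lower barrier $\ul u$ (solving $\om_{\ul u}^n=M\,d\mu$) works just as well as the paper's $v+h$, so that choice is fine.
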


\begin{proof}
The proof follows the lines of  \cite[Theorem~1.1]{Ko00} (see also  \cite{Ce84}), which applied Schauder's fixed theorem. In the presence of the Hermitian background form
we need  results from \cite{KN21} to verify the hypothesis of that theorem.

We start with the following simple observation.
 
\begin{lem} \label{lem:compact-support}
We may assume that $\mu$  has compact support in $\Om$.
\end{lem}

\begin{proof}Suppose that the problem is solvable for compactly supported measures. Let $\{\Om_j\}_{j\geq 1}$ be an increasing exhausting sequence  of open sets that are relatively compact in $\Om$.  Then we can find a sequence of functions: $u_j \in PSH(\Om, \om) \cap L^\infty(\Om)$ with $\lim_{z\to x} u_j(z) = \vphi(x)$ solving
$$
	(\om + dd^c u_j)^n = F(u_j, z) {\bf 1}_{\Om_j} d\mu.
$$
By Proposition~\ref{prop:uniqueness} this is a decreasing sequence and
$
	v + h \leq u_j \leq h \quad \text{on } \ov\Om,
$
where $h$ is the maximal function defined in \eqref{eq:maximal}. 
Put $u = \lim u_j$. By monotone convergence theorems \cite{DK12} (see also \cite{BT82}) we have that $\om_{u_j}^n$ converges weakly to $\om_u^n$. Also $F(u_j, z) {\bf 1}_{\Om_j} d\mu$ converges weakly to $F(u, z) d\mu$ when $j\to \infty$. Thus $u$ satisfies the equation $\om_u^n = F(u, z) d\mu$.
\end{proof}

%\begin{proof}[Proof of Theorem~\ref{thm:existence-local}]

 By Lemma~\ref{lem:compact-support} we assume that $\supp \mu$ is compact in $\Om$.
Since $F$ is bounded, without loss of generality we may assume that $0 \leq F \leq 1$, as we can rescale $ \mu$ and $v$ by a positive constant.  Let $h$ be a maximal $\om$-psh in $\Om$ as in \eqref{eq:maximal}. Then, the set 
$$
	\cA = \{u \in PSH(\Om, \om): v+ h \leq u \leq h\}
$$
is a convex and  bounded set in $L^1(\Om)$ with respect to $L^1-$topology. Thus, it is a compact set. We define the map $T : \cA \to \cA$, where $T(u) =w$ is the solution of
$$
	w \in \cA, \quad (\om + dd^c w)^n = F(u, z) d\mu,
$$
where this solution is uniquely determined  by \cite[Theorem~3.1]{KN21}. 

Next, we shall verify that $T$ is continuous. Let $\{u_j\}$ be a sequence in $\cA$ such that $u_j  \to u$ in $L^1(\Om)$. Set $w = T(u)$ and $w_j = T(u_j)$. The continuity of $T$ will follow if we have 
\[ \label{eq:lim-sup-inf}
	\limsup w_j \leq w \leq \liminf w_j.
\]
Since $\cA$ is compact in $L^1(\Om)$ we may assume that $w_j$ converges to $w$ in $ L^1(\Om).$ In the arguments we often pass to a subsequence which does not affect the  proof. We also skip the renumbering of indices of those subsequences.

We need the following result which  is essentially due to Cegrell \cite{Ce98}.
\begin{lem}\label{lem:RHS-convergence}  There is a subsequence of $\{F(u_j, z)\}_{j\geq 1}$ that converges  to $F(u, z)$ in $L^1(d\mu)$.
\end{lem}  

\begin{proof}[Proof of Lemma~\ref{lem:RHS-convergence}] Since $u_j$ are uniformly bounded we may subtract a constant and assume that $u_j, u \leq 0$ for all $j\geq 1$. Then it follows from \cite[Lemma~2.1]{KN21} that $\lim \int_\Om u_j d\mu = \int_\Om u d\mu$.  Since $\mu$ is dominated by capacity, it follows from \cite[Corollary~2.2]{KN21}  that $u_j \to u$ in $L^1(d\mu)$. Passing to a subsequence we have that $u_j$ converges to $u$ almost everywhere in $d\mu$. Since $F(t, z)$ is continuous in  $t$, the sequence $F(u_j, z)$ converges $\mu$-a.e to $F(u, z)$. By the Lebesgue dominated convergence theorem we get the conclusion.
\end{proof}

After passing to  a subsequence we may also assume  that $u_j$ converges to $u$ almost everywhere in $d\mu$.  Let us check the first inequality in \eqref{eq:lim-sup-inf}. Define $\wh u_k = \inf_{j\leq k} u_j$ and  denote by $\wh w_k \in \cA$ the sequence  of solutions to
$$
	(\om + dd^c \wh w_k)^n = F(\wh u_k, z) d\mu.
$$
Since $(\om + dd^c \wh w_k)^n$ is increasing, by the comparison principle \cite[Corollary~3.4]{KN1},  $\wh w_k$ decreases to $\wh w \in \cA$.  As $u_j$ converges to $u$, we have  that $\wh u_k$ increases to $u$ (almost everywhere in $d\mu$). Therefore, 
$$
	(\om + dd^c \wh w)^n = \lim_{k\to +\infty}(\om + dd^c \wh w_k)^n = \lim_{k\to +\infty} F (\wh u_k, z) d\mu = F(u, z) d\mu,
$$
where the first equality holds by the convergence theorem in \cite{DK12} (see also  \cite{BT82}) and the last equality follows from Lemma~\ref{lem:RHS-convergence} and the Lebesgue monotone convergence theorem.
By the uniqueness of the solution $w = \wh w$. Note that $\wh w_k \geq w_k$ because $(\om+ dd^c w_k)^n \geq (\om + dd^c \wh w_k)^n$. Therefore,
$
	w = \lim \wh w_k \geq  \limsup w_k.
$

Next we prove the second inequality in \eqref{eq:lim-sup-inf}. By Hartogs' lemma for $\om$-psh functions, $u = (\limsup_{j\to +\infty} u_j)^*$. Define $\wt u_k = (\sup_{j\geq k} u_j)^*$ and $\wt w_k = T (\wt u_k)$. Since $(\om+ dd^c \wt w_k)^n = F(\wt u_k,z) d\mu$ is decreasing, again by the comparison principle \cite{KN1} one gets that the sequence $\wt w_k$ is increasing to some $\wt w \in \cA$.  Note also that
$$
	(\om +dd^c w_k)^n = F(u_k, z) d\mu \leq F (\wt u_k, z) d\mu = (\om + dd^c \wt w_k)^n.
$$
Hence, $\wt w_k \leq w_k$.
Furthermore, $\wt u_k \downarrow u$ (almost everywhere in $d\mu$). By the convergence theorem in [DK12]   and Lemma~\ref{lem:RHS-convergence} we infer $$(\om + dd^c \wt w)^n = \lim_{k\to +\infty} F(\wt u_k, z) d\mu = F(u, z) d\mu.$$
As above by the uniqueness $w = \wt w$. Then, $w = \lim \wt w_k \leq \liminf w_k.$

Thus, we conclude the continuity of $T$. The Schauder theorem says that $T$ has a fixed point $T(u) = u$. This gives the existence of a solution to the Dirichlet problem. 
\end{proof}

\begin{remark} Thanks to the first  lemma of the above proof the theorem can be  extended to the case of unbounded $F$, e.g.  $F(u, z) = |u|^{-\al}$ with $\al>0$, in the Dirichlet problem: 
$$\begin{cases} 
u \in PSH(\Om) \cap L^\infty(\ov\Om),\\
( dd^c u)^n = |u|^{-\al} d\mu, \\
u=  0 \quad\text{on } \d \Om.
\end{cases}
$$
Indeed, suppose that there exists a subslution $\ul u \in PSH(\Om) \cap L^\infty(\ov\Om)$ such that $\ul u =0$ on $\d \Om$ and 
$(dd^c \ul u)^n \geq |\ul u|^{-\al} d\mu.$ The proof above shows that for the sequence $\Om_j \uparrow \Om$ in Lemma~\ref{lem:compact-support} we can find $u_j \in PSH(\Om) \cap L^\infty(\ov\Om)$ such that 
$$	(dd^c u_j)^n = |u_j|^{-\al} {\bf 1}_{\Om_j} d\mu, \quad u_j =0  \text{ on } \d\Om.
$$
Since $|u_j|^\al (dd^c u_j)^n \leq |\ul u|^\al (dd^c \ul u)^n$,  by the comparison principle $\ul u \leq u_j \leq 0$. Similarly, $u_j$ is a uniformly bounded and decreasing sequence. Therefore, $u = \lim_{j\to\infty} u_j$ is the unique solution to  the Dirichlet problem.

Furthermore  one can obtain better regularity of the solution under stronger assumptions on $\mu$. Let us consider $\mu =f(z) dV_{2n} $ with $f \in L^1(\Om)$ and let $\rho$ be the strictly pluriharmonic defining function for $\Om$. Assume that $|\rho|^{-\al} f \in L^p(\Om)$ for some $p>1$ (e.g., when $0<\al<1$ and $f\in L^p(\Om)$  is bounded near the boundary $\d\Om$.)   Then, we have
$$
	 |\rho|^{-\al} f dV_{2n} = (dd^c w)^n, \quad w = 0 \quad\text{on } \d \Om
$$
for some H\"older continuous plurisubharmonic function $w$ (see \cite{GKZ08}). Therefore, 
$$
	f dV_{2n} = |\rho|^\al (dd^c w)^n \leq |w + \rho|^\al [dd^c (w+\rho)]^n.
$$
Therefore, $\rho+ w$ is a H\"older continuous subsolution to the Dirichlet problem. So there is a unique bounded solution $u$. Furthermore, $$|u|^{-\al} f dV_{2n} = (|\rho|/|u|)^\al |\rho|^{-\al} f dV_{2n}.$$
A simple use of H\"older inequality shows  that the measure on the right hand side is well dominated by capacity on every compact set of $\Om$. Thanks to \cite[Remark~4.5]{KN21} we get that $u$ is continuous.  Note that the H\"older continuity of $u$ is  proved in \cite{HQ19}.

The above problem appeared in \cite{BT77} and was more recently studied in \cite{Cz10} and \cite{HQ19,HQ21}. 
\end{remark}

It is well known that the convergence  of $\om$-psh functions in $L^p(\Om)$   does not imply the weak convergence of their associated Monge-Amp\`ere operators. 
However, under the assumption of a uniform bound of the Monge-Am\`ere measures this is the case, see \cite[Theorem~2.2]{CK06}. 
We give a (partial) corresponding  stability result  for Hermitian background metrics.

\begin{thm}\label{thm:stability} Suppose $d\mu  = (dd^c v)^n$ for some $v \in \cE_0(\Om)$. Let $0\leq f_j \leq 1$ be a sequence of $d\mu-$measurable functions such that $f_j d\mu $ converges weakly to $f d\mu$ as measures. Suppose that $u_j$ solves 
$$\begin{cases}
	u_j \in PSH(\Om, \om) \cap L^\infty(\ov \Om), \\
	(\om + dd^c u_j)^n = f_j d\mu, \\
	\lim_{z\to x} u_j(z) = \vphi(x)\quad\text{for } x\in \d \Om ,
\end{cases} $$
for a continuous function $\vphi$.

Then, for $u =\lim u_j$ in $L^1(\Om)$ we have $(\om + dd^c u)^n = f d\mu$. 
\end{thm}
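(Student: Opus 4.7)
The plan follows the spirit of the continuity argument in the proof of Theorem~\ref{thm:existence-local}: combine a uniform $L^\infty$ bound on the $u_j$, monotone envelope approximations of the sequence, and the Dinew--Ko\l odziej convergence theorem for Monge--Amp\`ere operators along bounded monotone $\om$-psh limits.

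\textbf{Uniform bound and passage to the limit.} Let $h$ be the continuous maximal $\om$-psh extension of $\vphi$ from \eqref{eq:maximal}. The chain
\[
\om_{v+h}^n \geq (dd^c v)^n = d\mu \geq f_j\,d\mu = \om_{u_j}^n \geq 0 = \om_h^n
\]
together with Proposition~\ref{prop:uniqueness} yields $v + h \leq u_j \leq h$ for every $j$, so $\{u_j\}$ is uniformly bounded in $L^\infty(\ov\Om)$. After passing to a subsequence if necessary, the hypothesized limit $u$ is $\om$-psh with $v+h \leq u \leq h$, and by \cite[Corollary~2.2]{KN21} (since $d\mu$ is dominated by capacity) we also get $u_j \to u$ in $L^1(d\mu)$ and $\mu$-a.e.

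\textbf{Envelopes and auxiliary Dirichlet problems.} Set $\tilde u_k := (\sup_{j\geq k} u_j)^*$, which is $\om$-psh and, by Hartogs' lemma for $\om$-psh functions, decreases to $u$. By the convergence theorem \cite{DK12}, $\om_{\tilde u_k}^n \to \om_u^n$ weakly. On the data side, apply Theorem~\ref{thm:existence-local} to obtain, for each $k$, bounded $\om$-psh solutions
\[
\om_{\tilde w_k}^n = \bigl(\sup_{j \geq k} f_j\bigr) d\mu, \quad \om_{\hat w_k}^n = \bigl(\inf_{j \geq k} f_j\bigr) d\mu,
\]
both with boundary value $\vphi$; the suprema and infima are $\mu$-measurable and bounded by $1$, so the right-hand sides are admissible. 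Proposition~\ref{prop:uniqueness} then gives $\tilde w_k \leq u_j \leq \hat w_k$ for $j \geq k$, so $\tilde w_k \leq u \leq \hat w_k$ in the $L^1$-limit. Letting $k \to \infty$, the monotone limits $\tilde w_k \uparrow \tilde w$ and $\hat w_k \downarrow \hat w$ satisfy $\om_{\tilde w}^n = (\limsup_j f_j) d\mu$ and $\om_{\hat w}^n = (\liminf_j f_j) d\mu$ by another use of \cite{DK12}.

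\textbf{Main obstacle and closing step.} The sandwich $\tilde w \leq u \leq \hat w$ alone does not identify $\om_u^n$, because weak convergence of $f_j\,d\mu$ to $f\,d\mu$ does not force $\liminf_j f_j = \limsup_j f_j = f$ $\mu$-a.e.; indeed oscillatory examples like $f_j = \sin^2(j\cdot)$ show the envelope bounds are strict in general. This gap is the hardest step. The cleanest route I foresee is to upgrade $u_j \to u$ from $L^1(d\mu)$-convergence to convergence in capacity on relatively compact subsets of $\Om$, exploiting the uniform domination $\om_{u_j}^n \leq d\mu$ and the capacity-absolute-continuity of $d\mu = (dd^c v)^n$ with $v \in \cE_0$, along the lines of \cite[Theorem~2.2]{CK06} adapted to the Hermitian setting via the comparison principle Lemma~\ref{lem:local-CP}. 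Once convergence in capacity is at hand, a Bedford--Taylor-type argument using uniform boundedness yields weak convergence $\om_{u_j}^n \to \om_u^n$, and combined with the hypothesis $\om_{u_j}^n = f_j d\mu \to f d\mu$ this identifies $\om_u^n = f d\mu$.
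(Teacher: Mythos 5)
Your setup is sound — the bound $v+h\leq u_j\leq h$, the $L^1(d\mu)$ and $\mu$-a.e.\ convergence, and the correct diagnosis that the monotone-envelope trick from the existence proof cannot identify $\om_u^n$ here because $\liminf f_j$ and $\limsup f_j$ need not coincide with $f$. The problem is the closing step. You propose to upgrade to convergence in capacity along the lines of \cite[Theorem~2.2]{CK06}, but this is precisely what is \emph{not} available on a Hermitian background in general dimension: the paper says so explicitly in the paragraph right after this theorem, and Corollary~\ref{cor:capacity-convergence} only recovers capacity convergence when $n=2$. The obstruction is made transparent by Proposition~\ref{prop:characterization}: capacity convergence is equivalent to
\[
\lim_{j\to\infty}\int_\Om |u_j-u|\,\om_{u_j}^k\wed\om^{n-k}=0\quad\text{for all } k=0,\dots,n,
\]
whereas the uniform domination $\om_{u_j}^n\leq d\mu$ only gives control of the top power $k=n$, and $L^1$ convergence handles $k=0$; the intermediate $k$'s are the sticking point (and require the $n=2$ integration-by-parts argument of Lemma~\ref{lem:2-dim}). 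In the Euclidean setting of \cite{CK06} the mixed masses $\int (dd^c u_j)^k\wed(dd^c\rho)^{n-k}$ are controlled by the top one via the Chern--Levine--Nirenberg/comparison machinery, but with a non-closed $\om$ this breaks down, which is exactly why the paper labels Theorem~\ref{thm:stability} a ``partial'' stability result.

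The paper's actual proof sidesteps capacity convergence entirely. From the uniform boundedness it invokes \cite[Lemma~3.5~c)]{KN21} to extract a subsequence with
\[
\int_\Om |u_{j_s}-u|\,(\om+dd^c u_{j_s})^n\to 0,
\]
i.e.\ only the $k=n$ energy estimate, and then applies \cite[Lemma~3.6]{KN21}, which shows that this single estimate together with $u_{j_s}\to u$ in $L^1$ already forces weak convergence $\om_{u_{j_s}}^n\to\om_u^n$. Since $\om_{u_{j_s}}^n=f_{j_s}d\mu\to f\,d\mu$ by hypothesis, the identification $\om_u^n=f\,d\mu$ follows. So the key input you are missing is that the cited lemmas from \cite{KN21} make the weak convergence of Monge--Amp\`ere measures depend only on the top-degree energy, which is strictly weaker than the full hierarchy of mixed estimates that convergence in capacity would demand.
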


\begin{proof} Let $h$ be the maximal function defined in \eqref{eq:maximal}. Then by the comparison principle \cite[Corollary~3.4]{KN1} we have $h+ v \leq u_j \leq h$. Thus the sequence $\{u_j\}$ is uniformly bounded and this allows to apply \cite[Lemma~3.5 c)]{KN21} and conclude  that there is a subsequence $\{u_{j_s}\}$ such that 
\[\label{eq:convergence-n}
	\lim_{j_s\to \infty} \int_\Om |u_{j_s} -u| (\om + dd^c u_{j_s})^n =0,
\] 
and moreover $\om_{u_{j_s}}^n$ converges weakly to $\om_u^n= f d\mu$, see   \cite[Lemma~3.6]{KN21}. 
This gives the  result. 
\end{proof}

In this statement we would like to have also that a subsequence of  $u_j$ converges to $u$ in capacity. Near the end of this section (Corollary~\ref{cor:capacity-convergence}) we are able to do this for $n=2.$ 
Recall that the Bedford-Taylor capacity for a Borel set $E\subset \Om$ is defined by
\[ \label{eq:BT-capacity} 
	cap (E) = cap (E,\Om) = \sup\left\{  \int_E (dd^c v)^n: v\in PSH(\Om), -1 \leq v\leq 0 \right\}.
\]
Note that there is another (natural) capacity associated with the metric $\om$ which is equivalent to the one above (see \cite[Lemma~5.6]{KN21}).

\begin{prop}\label{prop:characterization} Let $\{u_j\}$ be the sequence in Theorem~\ref{thm:stability} in the case of smooth  $\vphi$. Then, $u_j$ converges to $u$ in capacity if and only if 
\[ \label{eq:hessian-energy}
 \lim_{j\to +\infty }\int_{\Om} |u_j -u| \om_{u_j}^k \wed \om^{n-k} =0, \quad k=0,...,n.
\]
\end{prop}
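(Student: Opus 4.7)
\emph{Plan.} I will prove the two directions separately, bridging between the Bedford--Taylor capacity of the sublevel sets $\{|u_j-u|>s\}$ and the mixed integrals \eqref{eq:hessian-energy} via Chebyshev-type inequalities together with a uniform capacity domination of the measures $\om_{u_j}^k\wed\om^{n-k}$.

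For the forward direction, the starting point is that the family $\{u_j\}$ is uniformly bounded on $\ov\Om$ (by the sandwich $h+v\le u_j\le h$ used in the proof of Theorem~\ref{thm:stability}), so $|u_j-u|\le 2M$ for some fixed $M$. I would then invoke two uniform estimates: (i) a total mass bound $\int_\Om \om_{u_j}^k\wed\om^{n-k}\le C$, via Chern--Levine--Nirenberg-type estimates in the Hermitian setting, with the boundary layer controlled by the smoothness of $\vphi$; and (ii) a capacity domination $\om_{u_j}^k\wed\om^{n-k}(E)\le C\cdot\mathrm{cap}(E)$ on compact subsets of $\Om$, following from Bedford--Taylor theory applied locally together with the equivalence of capacities of \cite[Lemma~5.6]{KN21}. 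Splitting, for any $\delta>0$,
$$
\int_\Om |u_j-u|\,\om_{u_j}^k\wed\om^{n-k} \;\le\; \delta\, C + 2M\cdot\om_{u_j}^k\wed\om^{n-k}\bigl(\{|u_j-u|>\delta\}\bigr),
$$
the first summand is made small by the choice of $\delta$, and the second tends to $0$ by (ii) together with the assumed convergence $\mathrm{cap}(\{|u_j-u|>\delta\})\to 0$.

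For the reverse direction, using the capacity equivalence from \cite[Lemma~5.6]{KN21} it suffices to show $\mathrm{cap}_\om(\{|u_j-u|>s\})\to 0$ for every $s>0$, where $\mathrm{cap}_\om(E):=\sup\{\int_E \om_w^n: w\in PSH(\Om,\om),\,-1\le w\le 0\}$. Chebyshev's inequality reduces the problem to establishing a uniform-in-$w$ bound of the shape
$$
\int_\Om |u_j-u|\,\om_w^n \;\le\; C\sum_{k=0}^n \int_\Om |u_j-u|\,\om_{u_j}^k\wed\om^{n-k}.
$$
I would attack this by expanding $\om_w^n$ via the telescoping identity
$$
\om_w^n-\om_{u_j}^n \;=\; dd^c(w-u_j)\wed \sum_{k=0}^{n-1}\om_w^k\wed \om_{u_j}^{n-1-k},
$$
iterating so as to trade the currents $\om_w$ for $\om$ and $\om_{u_j}$, and then integrating by parts (after a standard regularization of $|u_j-u|$) to move the derivatives onto controlled factors, absorbing the Hermitian torsion terms produced by $d\om$ and $dd^c\om$ into the lower-order mixed integrals on the right as in the estimates of \cite[Section~3]{KN21}.

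The technical heart, and the main obstacle, is the reverse direction. Running integration by parts against the rough, non-$\om$-psh integrand $|u_j-u|$ while maintaining uniformity in the test function $w$, and simultaneously handling the non-closedness of $\om$, is delicate. Closing the estimate will likely require a Hermitian analog of Xing's convergence lemma, together with the uniform $L^\infty$-bound on $u_j$ and the hypothesis $\om_{u_j}^n\le (dd^cv)^n$ with $v\in\cE_0(\Om)$ to control the global masses produced by the telescoping.
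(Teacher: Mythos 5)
Your forward direction is sound and is essentially the paper's argument: uniform boundedness of $\{u_j\}$, a Chern--Levine--Nirenberg total mass bound, and capacity domination of the measures $\om_{u_j}^k\wed\om^{n-k}$ on compact subsets (the boundary layer being handled by $|u_j-u|\le 2|v|$ with $v\in\cE_0(\Om)$, which is small near $\d\Om$ uniformly in $j$). This is exactly what is packaged in \cite[Lemma~2.3]{KN21}, which the paper invokes after passing to $\wh u_j=u_j+g$.

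The reverse direction, however, has a genuine gap, and you have correctly located it yourself: the proposed uniform-in-$w$ inequality
$\int_\Om|u_j-u|\,\om_w^n\le C\sum_k\int_\Om|u_j-u|\,\om_{u_j}^k\wed\om^{n-k}$
is not established, and the telescoping-plus-integration-by-parts strategy against the rough integrand $|u_j-u|$ is not the right tool and is unlikely to close (there is no reason a measure $\om_w^n$, which may charge sets where $\om_{u_j}^k\wed\om^{n-k}$ puts no mass for $k\ge1$, should be dominated in this integrated sense uniformly over all admissible $w$). The device that makes the reverse implication work is different and much more elementary. First, since $\vphi$ is smooth one chooses $g$ strictly plurisubharmonic near $\ov\Om$ with $dd^cg\ge\om$ and $g=-\vphi$ on $\d\Om$; then $\wh u_j=u_j+g$ and $\wh u=u+g$ are genuinely plurisubharmonic with the same boundary values, so the Hermitian form disappears entirely from the capacity estimate and no torsion terms ever arise. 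Second, one must split the bad set asymmetrically: $cap(|\wh u_j-\wh u|>\veps)\le cap(\wh u_j-\wh u>\veps)+cap(\wh u-\wh u_j>\veps)$. The first piece tends to $0$ by Hartogs' lemma (equivalently, $\max\{\wh u_j,\wh u\}\to\wh u$ in capacity); your single Chebyshev bound on $|u_j-u|$ misses this one-sided term, which is \emph{not} controlled by $(dd^c\wh u_j)^n$. The second piece is handled by the classical comparison-principle capacity estimate of \cite[page 718]{CK06}: for any psh $w$ with $-1\le w\le0$, comparing $\wh u_j$ with $\wh u-\veps/2+(\veps/2)w$ on the sublevel set gives
$cap(\wh u_j<\wh u-\veps)\le(2/\veps)^n\int_{\{\wh u_j<\wh u-\veps/2\}}(dd^c\wh u_j)^n\le(2/\veps)^{n+1}\int_\Om|u_j-u|(dd^c\wh u_j)^n$,
and expanding $(dd^c\wh u_j)^n=(\om_{u_j}+\tau)^n$ with $\tau=dd^cg-\om\le C\om$ reduces the right-hand side to the quantities in \eqref{eq:hessian-energy}. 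In short: replace the attempted global telescoping estimate by (i) the reduction to psh functions via $g$, (ii) the Hartogs step for the upper deviation, and (iii) the comparison principle on sublevel sets for the lower deviation.
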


\begin{proof} Suppose that $u_j \to u$ in capacity, i.e., for a fixed $\veps >0$, we have $$\lim_{j\to +\infty} cap (|u_j -u|>\veps) =0.$$
Let $g$ be a strictly plurisubharmonic function in a neighborhood of $\ov\Om$ such that $dd^c g \geq \om$ and $g= -\vphi$ on $\d\Om$. Denoting $\wh u_j = u_j+ g$, we have $\wh u_j \to \wh u= u+g$ in capacity. Let $\rho$ be a strictly plurisubharmonic defining function of $\Om$ such that $dd^c \rho \geq \om$ on $\ov \Om$. By \cite[Lemma~3.3, Corollary~3.4]{KN21}, 
\[\label{eq:uniform-mass}
	\sup_{j\geq 1} \int_{\Om} (dd^c \wh u_j)^k \wed (dd^c \rho)^{n-k} \leq C
\]
for some $C$. We can repeat the argument of \cite[Lemma~2.3]{KN21}, with the sequence $\{w_j\}_{j\geq 1}$ in the place of $\{\wh u_j\}_{j\geq 1}$,  to  obtain
$$
	\lim_{j\to +\infty} \int |u_j -u| (dd^c \wh u_j)^k \wed (dd^c \rho)^{n-k} = 0,
$$ 
This gives the proof the necessary condition.

It remains to  prove the other implication. Suppose that we have the limit \eqref{eq:hessian-energy} for all $k=0,...,n$. Note that 
$$
	cap(|u_j-u|>\veps) = cap (|\wh u_j - \wh u|>\veps) \leq cap (\wh u_j - \wh u>\veps) + cap (\wh u - \wh u_j >\veps).
$$
By Hartogs' Lemma it is easy to see that $\max\{\wh u_j, \wh u\} \to \wh u$ in capacity. Hence, 
$
	\lim_{j\to +\infty} cap (\wh u_j -\wh u > \veps) =0.
$
The remaining term is estimated as in  \cite[page 718]{CK06} via the comparison principle for plurisubharmonic functions:
$$\begin{aligned}
	cap (\wh u_j < \wh u -\veps) 
\leq \left(2/\veps\right)^n \int_{\{u_j < u -\veps/2\}}  (dd^c \wh u_j)^n \\
\leq (2/\veps)^{n+1} \int_{\Om} |u_j-u| (dd^c \wh u_j)^n.
\end{aligned}$$
Denote $\tau = dd^c g -\om \leq C \om$. Then, $dd^c \wh u_j = \om_{u_j} + \tau$. It follows that 
$$
	\int_{\Om}|u_j -u| (dd^c \wh u_j)^n = \sum_{k=0}^n \binom{n}{k}\int_\Om |u_j -u|  \om_{u_j}^k \wed \tau^{n-k}
$$
which  goes to zero by the assumption. Thus, the convergence in capacity is proved by the previous inequality.
\end{proof}

\begin{lem}\label{lem:2-dim} Suppose $n=2$. Let $u,v \in PSH(\Om, \om) \cap L^\infty(\ov \Om)$ be such that  $u\leq v$ in $\Om$ and $u = v$ near $\d \Om$. Let $-1 \leq \rho \leq 0$ be a plurisubharmonic function in $\Om$. Then,
$$
	\int_\Om  (v-u)^3 (dd^c \rho)^2 \leq 6 \int_\Om (v-u) \om_u^2 + C \int_\Om (v-u)^2 \om^2 +C E,
$$
where $C$ is a uniform constant depending only on $\Om$ and $\om$ and 
$$E =   \left(\int_{\Om} (v-u) \om_u \wed \om  \right)^\frac{1}{2} \left(\int_{\Om}  (v-u)^2\,\om^2 \right)^\frac{1}{2}. $$
\end{lem}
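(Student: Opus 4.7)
The plan is to iterate a Hermitian integration-by-parts identity that trades powers of $w:=v-u$ for powers of $\om_u$, using that $w\geq 0$ is compactly supported in $\Om$ (so Stokes has no boundary contribution) and that $\rho\leq 0$ lets one discard several nonpositive terms. The main tool in $n=2$ is the identity
$$\int_\Om f\,dd^c g\wed\eta \;=\; \int_\Om g\,dd^c f\wed\eta + \int_\Om(f\,d^c g - g\,d^c f)\wed d\eta,$$
valid for smooth $(1,1)$-forms $\eta$ whenever one of $f,g$ vanishes near $\d\Om$, because in complex dimension two the a priori correction $(df\wed d^c g - dg\wed d^c f)\wed\eta$ has bidegree $(2,0)+(0,2)$ wedged with $(1,1)$ and hence vanishes.

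First I would apply the identity with $f=w^3$, $g=\rho$, $\eta=dd^c\rho$; here $d\eta=0$, so there is no error term. Expanding $dd^c(w^3)=3w^2\,dd^c w+6w\,dw\wed d^c w$, substituting $dd^c w=\om_v-\om_u$, and discarding the two nonpositive contributions (the $6\int\rho w\,dw\wed d^c w\wed dd^c\rho$ piece and the $3\int\rho w^2\,\om_v\wed dd^c\rho$ piece) yields $\int_\Om w^3(dd^c\rho)^2\leq 3\int_\Om w^2\,\om_u\wed dd^c\rho$. Apply the identity again with $f=w^2$, $g=\rho$, $\eta=\om_u$; now $d\om_u=d\om$ produces a genuine error, while the same discard argument bounds the main piece by $2\int w\,\om_u^2$. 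This gives
$$\int_\Om w^3(dd^c\rho)^2 \;\leq\; 6\int_\Om w\,\om_u^2 + 3E_1,\qquad E_1:=\int_\Om\bigl(w^2\,d^c\rho - 2\rho w\,d^c w\bigr)\wed d\om.$$

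To absorb $3E_1$ into $C\int w^2\om^2+CE$, observe that $d\om$ is smooth and uniformly bounded on $\ov\Om$ (in $n=2$ one has $d\om=\theta\wed\om$ for a smooth Lee $1$-form $\theta$), so pointwise $|\alpha\wed d\om|\leq C|\alpha|_\om\,\om^2$ for any $1$-form $\alpha$. Combined with Cauchy--Schwarz in $L^2(\om^2)$ this yields
$$|E_1|\;\leq\; C\left(\int_\Om w^2\om^2\right)^{1/2}\!\bigl[Q^{1/2}+P^{1/2}\bigr],$$
where $Q:=\int_\Om w^2\,d\rho\wed d^c\rho\wed\om$ and $P:=\int_\Om dw\wed d^c w\wed\om$. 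A short Stokes-plus-Young computation gives $P\leq C(\int w\,\om_u\wed\om+\int w^2\om^2)$. For $Q$, I would introduce the auxiliary function $\Phi:=\rho+\rho^2/2$; it is plurisubharmonic with $-\tfrac12\leq\Phi\leq 0$ and $dd^c\Phi-d\rho\wed d^c\rho=(1+\rho)dd^c\rho\geq 0$, whence $Q\leq\int w^2\,dd^c\Phi\wed\om$. Applying the interchange identity once more to this integral produces a main term $\leq C\int w\,\om_u\wed\om$ and a new error of shape $C(\int w^2\om^2)^{1/2}(Q^{1/2}+P^{1/2})$, which Young's inequality absorbs into $Q/2$ plus a multiple of $\int w^2\om^2$. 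Substituting back gives $Q\leq C(\int w\,\om_u\wed\om+\int w^2\om^2)$, and feeding this and the bound on $P$ into the estimate for $E_1$ yields $|E_1|\leq C\int w^2\om^2 + CE$, completing the proof.

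The main obstacle is controlling $Q$: since $\rho$ is only bounded psh, $|d\rho|_\om$ has no pointwise bound, and $d\rho\wed d^c\rho$ cannot be handled term by term. The passage to the auxiliary psh function $\Phi=\rho+\rho^2/2$ converts $d\rho\wed d^c\rho$ into a Monge--Amp\`ere--dominated form on which the IBP machinery recurs, and the mixed-norm term $E=(\int w\,\om_u\wed\om)^{1/2}(\int w^2\om^2)^{1/2}$ appearing in the conclusion is precisely what emerges from the Young-inequality bootstrap needed to close this estimate.
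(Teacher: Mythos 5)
Your proof is correct, but it takes a genuinely different route from the paper's.

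The paper also begins with $\int_\Om h^3(dd^c\rho)^2=\int_\Om\rho\,dd^c h^3\wed dd^c\rho$ and drops the nonpositive terms, arriving at $\int_\Om h^3(dd^c\rho)^2\leq 3\int_\Om h^2\om_u\wed dd^c\rho$ exactly as you do. The divergence is in the second integration by parts. The paper writes the right-hand side as $\int_\Om\rho\,dd^c(h^2\om_u)$, i.e.\ it transfers the whole $dd^c$ from $\rho$ onto the compactly supported $(1,1)$-form $h^2\om_u$. Expanding $dd^c(h^2\om_u)$ then produces only the main term $2h\,dd^ch\wed\om_u$ and error terms of the form $h\,dh\wed d^c\om$ and $h^2 dd^c\om$ --- crucially, no derivative of $\rho$ ever appears, because $\rho$ ends up sitting undifferentiated against the current. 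The $dh\wed d^c\om$ error is then handled by a single Cauchy--Schwarz (via \cite[Proposition~1.4]{Ng16}) together with the estimate $\int dh\wed d^ch\wed\om\leq C(\int h^2\om^2+\int h\om_u\wed\om)$, which is exactly your bound on $P$. By contrast, your symmetric identity $\int f\,dd^c g\wed\eta=\int g\,dd^c f\wed\eta+\int(f\,d^cg-g\,d^cf)\wed d\eta$ puts $dd^c\rho$ onto the \emph{function} $w^2$ alone, leaving $\om_u$ as a spectator $(1,1)$-form; since $d\om_u=d\om\neq 0$, this leaves a genuine $\int w^2\,d^c\rho\wed d\om$ error term. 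As you observe, this is the sticking point: $\rho$ is merely bounded psh, so $d\rho$ has no pointwise control, and your fix --- passing to $\Phi=\rho+\rho^2/2$ so that $d\rho\wed d^c\rho\leq dd^c\Phi$ with $-\tfrac12\leq\Phi\leq 0$ psh, then reapplying the identity and closing with Young --- is a correct but extra layer of bootstrapping that the paper avoids entirely by its choice of how to integrate by parts. Both arguments rely on the same $n=2$ bidegree cancellation (in your case for the correction $(df\wed d^cg-dg\wed d^cf)\wed\eta$, in the paper's case for the identity $\int\alpha\wed dd^c\rho=\int\rho\,dd^c\alpha$), and both implicitly require the standard reduction to smooth $u,v,\rho$ via quasi-continuity and the convergence theorems, which you should make explicit since several of your intermediate quantities (e.g.\ $Q$) need to be finite for the Young absorption to be legitimate.

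Net assessment: same overall skeleton (two integrations by parts, discard nonpositive pieces using $\rho\leq 0$, Cauchy--Schwarz on the Hermitian torsion error), but the paper's placement of the $dd^c$ is cleaner and makes the auxiliary function $\Phi$ and the $Q$-bootstrap unnecessary. Your route costs more work but is self-contained and does not invoke \cite[Proposition~1.4]{Ng16}; it would be worth recording the $\Phi=\rho+\rho^2/2$ trick as it is a reusable device for converting $d\rho\wed d^c\rho$ into a Monge--Amp\`ere--dominated form when $\rho$ is only bounded.
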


\begin{proof} By quasi-continuity of plurisubharmonic and $\om$-psh functions and the convergence theorems in \cite{BT82} and \cite{DK12} we may assume that all functions are smooth. Let us denote $h = v-u \geq 0$. Then, $dd^c h = \om_v - \om_u$ and $dh\wed d^ch$ is a positive (1,1)-current. By integration by parts
$$\begin{aligned}
	\int_\Om h^3 (dd^c \rho)^2 
&=	 \int_{\Om} \rho dd^c h^3 \wed dd^c \rho\\
&=	\int_\Om \rho (3 h^2 dd^c h + 6 h dh \wed d^c h) \wed dd^c \rho \\
&\leq 	3\|\rho\|_\infty  \int_{\Om} h^2 \om_u \wed dd^c \rho.
\end{aligned}$$
Using the integration by parts again
$$
	\int_\Om h^2 \om_u \wed dd^c \rho = \int_\Om \rho dd^c ( h^2 \om_u).
$$
Here,
$$ \begin{aligned}
	\rho dd^c (h^2 \om_u) 
&= 	\rho \left[dd^c h^2 \wed \om_u+ 2 dh^2 \wed d^c \om + h^2 dd^c \om \right] \\
&= 	\rho \left[2 h dd^c h \wed \om_u + 2 dh \wed d^c h \wed \om_u + 2 dh^2 \wed d^c \om + h^2 dd^c \om \right] \\
&\leq	 2 |\rho| h\, \om_u^2 + 4\rho h \,dh \wed d^c \om + \rho h^2 dd^c \om.
\end{aligned}$$
because $\rho$ is negative.  Using $-C \om^2\leq dd^c \om \leq C\om^2$ and $-1\leq \rho \leq 0$, we have
$$
	\int_\Om  h^2 \om_u \wed dd^c \rho \leq 2 \int_\Om h \om_u^2 + 4 \left| \int_\Om \rho h\, dh \wed d^c \om \right| + C \int_\Om h^2 \om^2.
$$
Thus, to complete the proof we need to estimate the middle integral on the right hand side. In fact,  using \cite[Proposition~1.4]{Ng16} we have
$$\begin{aligned}
	\left|\int_{\Om} \rho h\, d h \wed d^c \om  \right| 
&\leq  C \left(\int_{\Om}  d h \wed d^c h \wed \om \right)^\frac{1}{2} \left(\int_{\Om} |\rho|^2 h^2 \;\om^2 \right)^\frac{1}{2} \\ 
&\leq C  \left(\int_{\Om} d h \wed d^c h \wed \om \right)^\frac{1}{2} \left(\int_{\Om}  h^2\,\om^2 \right)^\frac{1}{2}. 
\end{aligned}$$ 
Note that $2 dh \wed d^c h = dd^c h^2 - 2h dd^c h \leq dd^c h^2 + 2h \om_u$.  Therefore, 
$$2\int_\Om d h \wed d^c h \wed \om \leq \int_\Om (dd^c h^2  +2h \om_u)  \wed \om .$$
By integration by parts $$\int_\Om dd^c h^2 \wed \om = \int_\Om h^2 dd^c \om^2 \leq C \int_\Om h^2 \om^2.$$
Combining these inequalities we get
$$\begin{aligned}	
\left|\int_{\Om} \rho h\, d h \wed d^c \om  \right|  &\leq C  \left(\int_{\Om} h^2\om^2 + h \om_u \wed \om  \right)^\frac{1}{2} \left(\int_{\Om}  h^2\,\om^2 \right)^\frac{1}{2} \\
&\leq C\int_{\Om}  h^2\,\om^2  + C\left(\int_{\Om} h\om_u \wed \om   \right)^\frac{1}{2} \left(\int_{\Om}  h^2\,\om^2 \right)^\frac{1}{2},
 \end{aligned}$$
 where we used an elementary inequality $(x+y)^\frac{1}{2} \leq x^\frac{1}{2} + y^\frac{1}{2}$ with $x, y\geq 0$ in the second inequality.
This  completes the proof.
\end{proof}

\begin{cor}\label{cor:capacity-convergence} If $n=2$, then there is a subsequence $\{u_{j_s}\}$ of $\{
u_j\}$ in Theorem~\ref{thm:stability} that converges to $u$ in capacity.
\end{cor}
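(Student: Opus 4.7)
My plan is to bound the Bedford--Taylor capacity of the super-level sets $\{|u_j-u|>\veps\}$ directly, by combining Lemma~\ref{lem:2-dim} with the convergence $\int_\Om |u_j-u|\,\om_{u_j}^2 \to 0$ already obtained along a subsequence in the proof of Theorem~\ref{thm:stability}. By \eqref{eq:BT-capacity} and Chebyshev's inequality, for every $\rho\in PSH(\Om)$ with $-1\le\rho\le 0$ and every $\veps>0$,
$$
\veps^{3}\int_{\{|u_j-u|>\veps\}}(dd^c\rho)^{2} \le \int_\Om |u_j-u|^{3}\,(dd^c\rho)^{2},
$$
so it will suffice to show that the right-hand side tends to zero along a subsequence, \emph{uniformly} in such $\rho$. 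The strategy is to split $|u_j-u|^{3} \le 4((u_j-u)^+)^{3} + 4((u-u_j)^+)^{3}$ and feed each piece into Lemma~\ref{lem:2-dim}.

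The lemma demands two ingredients: the functions must be ordered, and they must coincide in a neighborhood of $\d\Om$. For the ordering I would replace the pair by $w_j^\delta := \max(u, u_j-\delta)$ (so that $w_j^\delta \ge u$ and $w_j^\delta - u = (u_j-u-\delta)^+$), and symmetrically $\tilde w_j^\delta := \max(u_j, u-\delta)$ for the other piece. The boundary matching comes from the following observation, exactly as in the proof of Theorem~\ref{thm:existence-local}: the comparison principle forces $h+v \le u_j \le h$ and $h+v \le u \le h$, where $h$ is the maximal $\om$-psh function with boundary value $\vphi$ and $v\in\cE_0(\Om)$ is the dominating subsolution from the hypothesis. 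Since $v$ vanishes on $\d\Om$, for each $\delta>0$ there is a collar of $\d\Om$ \emph{independent of} $j$ on which $|u_j-u|<\delta$, hence on which $w_j^\delta = u$ and $\tilde w_j^\delta = u_j$. I would then apply Lemma~\ref{lem:2-dim} to the pairs $(u,w_j^\delta)$ and $(u_j,\tilde w_j^\delta)$, and let $\delta\to 0^+$; monotone/dominated convergence then yields
$$
\int_\Om |u_j-u|^{3}(dd^c\rho)^{2} \le C\!\left[\int_\Om(u_j-u)^+\om_u^2 + \int_\Om(u-u_j)^+\om_{u_j}^2 + \int_\Om|u_j-u|^{2}\om^{2}\right] + C\,(E_j + E_j'),
$$
with $C$ depending only on $\Om$ and $\om$, and each $E_j,E_j'$ the product of a Chern--Levine--Nirenberg bounded factor with a factor already tending to zero.

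Each term in brackets then tends to zero along a suitable subsequence: $\int|u_j-u|^{2}\om^{2}$ by $L^1$ convergence together with the uniform $L^\infty$ bound; $\int(u-u_j)^+\om_{u_j}^2$ along the subsequence furnished by Theorem~\ref{thm:stability}; and $\int(u_j-u)^+\om_u^2$ because $\om_u^2 = f\,d\mu \le d\mu \le (dd^c v)^n$ is dominated by capacity, so $u_j\to u$ in $L^1(\om_u^2)$ by \cite[Corollary~2.2]{KN21}. Taking the supremum over admissible $\rho$ then gives $cap (\{|u_j-u|>\veps\})\to 0$ along the subsequence, which is the claimed convergence in capacity. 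The main obstacle will be reconciling the ``$u=v$ near $\d\Om$'' hypothesis of Lemma~\ref{lem:2-dim} with the fact that $u_j$ and $u$ agree only on $\d\Om$ itself; this is exactly what the $\max$-approximation, combined with the uniform sandwich $h+v\le u_j,u\le h$, is designed to handle.
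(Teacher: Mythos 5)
Your argument is correct, and its core is the same as the paper's: regularize by a $\max$ to enforce the ordering, use the sandwich $h+v\le u_j,u\le h$ (with $v\in\cE_0(\Om)$ vanishing on $\d\Om$) to get the boundary matching required by Lemma~\ref{lem:2-dim}, and kill the dominant term with \eqref{eq:convergence-n} and the uniform mass bound \eqref{eq:uniform-mass}. The one genuine divergence is in how the set $\{u_{j_s}>u+\veps\}$ is handled: the paper disposes of it in one line via Hartogs' lemma ($\max\{u_{j_s},u-1/s\}\to u$ in capacity), whereas you treat it symmetrically by a second application of Lemma~\ref{lem:2-dim} to the pair $(u,\max(u,u_{j}-\delta))$. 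That second application forces you to control $\int_\Om (u_j-u)^+\,\om_u^2$, which the paper never needs; your justification --- $\om_u^2\le\mu\le(dd^cv)^2$ is dominated by capacity, so $u_j\to u$ in $L^1(\om_u^2)$ by \cite[Corollary~2.2]{KN21}, exactly as in the proof of Lemma~\ref{lem:RHS-convergence} --- is valid, but it is an extra input. So your route is slightly longer and uses one more fact, while being more symmetric and avoiding Hartogs' lemma; the paper's route is cheaper because the upper bound $u_{j_s}\le u+o(1)$ outside small capacity is free from Hartogs. Everything else (the Chebyshev step $\veps^3\,cap(\{|u_j-u|>\veps\})\le\sup_\rho\int|u_j-u|^3(dd^c\rho)^2$, the $\delta$-collar where $|u_j-u|\le -v<\delta$ uniformly in $j$, the passage $\delta\to0^+$, and the estimate of the error terms $E_j,E_j'$ by a bounded factor times one tending to zero) is sound.
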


\begin{proof}
 Fix $a>0$. Let us denote $w_s = \max\{u_{j_s}, u-1/s\}$. By Hartogs' Lemma $w_s\to u$ in capacity. Moreover, $\{|u-u_j|>2a\} \subset \{|u-w_s|>a\} \cup \{ |w_s - u_{j_s}|>a \}$. Therefore, it is enough to show that 
$
	cap(|w_s - u_{j_s}|>a) \to 0 
$
as $s \to +\infty.$
By definition we have $w_s = \max\{u_{j_s}, u- 1/s\} \geq u_{j_s}$. Then, we need to show 
$$
	cap(\{u_{j_s} < w_s - a\}) \to  0 \quad\text{as} \quad s\to +\infty.
$$
To this end, by Lemma~\ref{lem:2-dim},
$$
	\left(\frac{a}{2} \right)^3 cap (\{ u_{j_s} < w_s -a \}) \leq  \int_\Om (  w_s - u_{j_s} )  \om_{u_{j_s}}^2 + C \int_\Om (w_s - u_{j_s})^2 \om^2 + CE_s,
$$
where $$E_s =   \left(\int_{\Om} (w_s-u_{j_s}) \om_{u_{j_s}} \wed \om  \right)^\frac{1}{2} \left(\int_{\Om}  (w_s-u_{j_s})^2\,\om^2 \right)^\frac{1}{2}.$$
By the assumption $u_j \to u$ in $L^1(\Om)$  (they are uniformly bounded) and \eqref{eq:convergence-n}, the first and second integrals on the right hand side go to zero as $s$ goes to infinity. 
Note that the first factor of $E_s$ is uniformly bounded by \eqref{eq:uniform-mass}. Hence, the last term $E_s$ also goes to zero as $s\to +\infty$.
Therefore, the conclusion follows.
\end{proof}

Proposition~\ref{prop:characterization} and Corollary~\ref{cor:capacity-convergence} have their analogues on a compact Hermitian manifold without boundary. They allow, for example,  to provide  another proof of the following result of Guedj and Lu \cite[Proposition~3.4]{GL21a}. 

\begin{lem} Let $(X, \om)$ be a compact $n$-dimensional Hermitian manifold (without boundary). Then for any $A>0$,
$$
	  \inf\left\{ \int_X (\om + dd^c v)^n: v \in PSH(X,\om), -A \leq v \leq 0\right\} >0.
$$
\end{lem}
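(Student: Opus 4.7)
I argue by contradiction, exploiting the compact Hermitian (no-boundary) analogues of Proposition~\ref{prop:characterization} and Corollary~\ref{cor:capacity-convergence} to reduce the uniform lower bound to a qualitative rigidity statement for individual bounded $\om$-plurisubharmonic functions.

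Suppose, for contradiction, that there is a sequence $v_j \in PSH(X,\om)$ with $-A \le v_j \le 0$ and $c_j := \int_X(\om+dd^cv_j)^n \to 0$. The uniform boundedness of $\{v_j\}$ together with the standard $L^1$-compactness of $\om$-psh families on a compact Hermitian manifold produces, after extracting a subsequence, a limit $v \in PSH(X,\om)\cap L^\infty(X)$ with $-A \le v \le 0$ and $v_j \to v$ in $L^1(X)$. I then invoke the compact Hermitian versions of Proposition~\ref{prop:characterization} and Corollary~\ref{cor:capacity-convergence}: their proofs transfer from Section~\ref{sec:MA-local} essentially verbatim, since the integration-by-parts identities (as in Lemma~\ref{lem:2-dim}) lose their boundary contributions when $\d X = \emptyset$, the quasi-continuity of bounded $\om$-psh functions still applies, and the Chern-Levine-Nirenberg-type uniform mixed-mass bounds $\sup_j \int_X \om_{v_j}^k \wed \om^{n-k} < \infty$ remain valid on the closed manifold. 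Passing to a further subsequence gives $v_j \to v$ in capacity, and the Bedford-Taylor / Xing continuity of the Monge-Amp\`ere operator along uniformly bounded capacity-convergent sequences yields weak convergence $(\om+dd^cv_j)^n \to (\om+dd^cv)^n$ of measures. In particular
\[\notag
\int_X(\om+dd^cv)^n \;=\; \lim_j c_j \;=\; 0,
\]
so $(\om+dd^cv)^n \equiv 0$ as a positive Radon measure on $X$.

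The contradiction comes from the rigidity statement that a bounded $\om$-plurisubharmonic function on the compact Hermitian manifold $(X,\om)$ cannot have vanishing total Monge-Amp\`ere mass. Upper semicontinuity of $v$ on compact $X$ ensures that $\sup_X v$ is attained, and the condition $\om_v^n \equiv 0$ makes $v$ locally --- on any small Stein chart $B$ with smooth psh potential $\phi$ satisfying $dd^c\phi = \om$ --- differ from a bounded maximal psh function by $\phi$. A classical maximum-principle argument combined with the connectedness of $X$ forces $v$ to be constant on $X$. But then $\om + dd^cv = \om$, so $\int_X(\om+dd^cv)^n = \int_X\om^n > 0$, contradicting the previous display.

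\textbf{Main obstacle.} The most delicate step is the extension of Corollary~\ref{cor:capacity-convergence} (established in the local setting only in dimension $n=2$) to arbitrary dimension on a compact Hermitian manifold; this appears to require either a dimensional restriction or the full Kolodziej-Nguyen Hermitian stability machinery together with a higher-dimensional analogue of Lemma~\ref{lem:2-dim} involving more intricate integration by parts. The rigidity step is also non-trivial in the non-K\"ahler regime, where the cohomological identity $\int_X\om_v^n = \int_X\om^n$ is no longer available, and one must work instead via local Stein charts and a careful maximum-principle argument for bounded $\om$-maximal functions.
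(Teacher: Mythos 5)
Your overall scheme (contradiction, $L^1$-compactness, weak convergence of the Monge--Amp\`ere measures to $\om_u^n$, hence $\om_u^n\equiv 0$, then a mass-positivity contradiction) is the same as the paper's, but two of your key steps do not go through as written. First, you route the weak convergence $(\om+dd^cv_j)^n\to(\om+dd^cv)^n$ through capacity convergence of the sequence $v_j$ itself, invoking an ``all-dimensional'' analogue of Corollary~\ref{cor:capacity-convergence}. That corollary is proved only for $n=2$ (via the integration-by-parts identity of Lemma~\ref{lem:2-dim}, which is genuinely two-dimensional), and you acknowledge the extension as an open obstacle rather than supplying it; moreover Proposition~\ref{prop:characterization} would require $\int_X|v_j-v|\,\om_{v_j}^k\wed\om^{n-k}\to 0$ for \emph{all} $k=0,\dots,n$, and only the cases $k=0$ (from $L^1$-convergence) and $k=n$ (since the total masses tend to $0$) are free; the intermediate mixed terms are not controlled. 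The paper avoids this entirely: it introduces the auxiliary sequence $w_j=\max\{u_j,u-1/j\}$, which converges to $u$ in capacity by the soft Hartogs argument, so that $\om_{w_j}^n\to\om_u^n$ weakly, and then transfers the conclusion to $\om_{u_j}^n$ via \cite[Lemma~3.6]{KN21}, whose hypothesis $\int_X|u_j-u|\,\om_{u_j}^n\to0$ holds trivially here because $|u_j-u|\le 2A$ and $\int_X\om_{u_j}^n\to0$. This sidestep is exactly what makes the lemma work in every dimension.

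Second, your terminal ``rigidity'' step is not correct as sketched. From $\om_v^n\equiv 0$ you cannot conclude that $v$ is constant by a maximum-principle argument: on a Stein chart the function $\phi+v$ (with $dd^c\phi=\om$) is merely a bounded \emph{maximal} plurisubharmonic function, and maximal psh functions are very far from constant (e.g.\ functions of $z_1$ alone in $\bC^n$, $n\ge2$, are maximal); nothing forces $\phi+v$ to attain an interior maximum, and the comparison-with-constants form of the maximum principle gives no information on a closed manifold. The actual ingredient needed --- and the one the paper cites --- is that the total Monge--Amp\`ere mass of a bounded $\om$-psh function on a compact Hermitian manifold is strictly positive (\cite[Remark~5.7]{KN1}); in the non-K\"ahler setting this is a nontrivial theorem (there is no cohomological identity $\int_X\om_v^n=\int_X\om^n$), not a consequence of connectedness plus the maximum principle. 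As it stands, your argument either presupposes the positivity it is meant to derive or relies on a false implication, so this step needs to be replaced by the citation (or an independent proof) of that mass-positivity result.
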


\begin{proof} By replacing $\om$ with $ \om/A$ and $v$ with  $v/A$ we may assume that $A =1$. We argue by contradiction. Suppose that there was a sequence $\{u_j\}_{j\geq 1}  \subset PSH(X, \om)$ such that $-1\leq u_j \leq 0$ and 
$$
	\lim_{j\to \infty} \int_{X} \om_{u_j}^n =0, \quad \sup_X u_j =0.
$$
By passing to a subsequence we assume that $u_j \to u$ in $L^1(X)$  and $u_j \to u$ a.e, where $$u = (\limsup_{j\to \infty} u_j)^* = \lim_{j\to \infty} (\sup_{\ell \geq j} u_\ell)^*.$$
Then, $-1\leq u \leq 0$ and $u \in PSH(X, \om)$. We will show that there exists a subsequence $\{u_{j_s}\}$ of $\{u_j\}$ such that $\om_{u_{j_s}}^n$ converges weakly to $\om_u^n$. Indeed, set 
$$
	w_{j} = \{u_j, u-1/j\}.
$$
By the Hartogs lemma $w_j$ converges to $u$ in capacity. Therefore, by the convergence  theorem in [BT82] and [DK12] 
$
	\lim_{j\to \infty} \om_{w_j}^n = \om_u^n.
$
Next, we will show that 
$$
	\int_X |u_j -u| (\om + dd^c u_j)^n \to 0	 \quad\text{and}\quad
	\int_{X} |u_j -u| (\om + dd^c w_j)^n \to 0
$$
as $j \to +\infty$. 
The first statment holds because $\int_X |u_j -u| \om_{u_j}^n \leq 2 \int_X \om_{u_j}^n \to 0$ by the assumption. The second convergence also holds because $w_j \to u$ in capacity and so the proofs of Lemma~2.1, Corollary~2.2, Lemma~2.3 in \cite{KN21} can be applied because all considered functions are uniformly bounded and $X$ is compact. 

Thus, again by \cite[Lemma~3.6]{KN21} there exists a subsequence $u_{j_s}$ such that $\om_{u_{j_s}}^n$ converges weakly to $\om_u^n$. In particular, $\int_X \om_u^n = \lim_{j_s\to +\infty} \int_X \om_{u_{j_s}}^n = 0$. Hence, $\om_u^n \equiv 0$ on $X$.  This is a contradiction with the fact that the Monge-Amp\`ere mass of a bounded $\om$-psh function is always positive \cite[Remark~5.7]{KN1}.
\end{proof}

\section{The Dirichlet problem}
\label{sec:DP}

In this section we solve the Dirichlet problem for Monge-Amp\`ere type equation under the existence of a bounded subsolution.
Let $\mu$ be a positive Radon measure on $M = \ov{M}\setminus \d M$. Let $\vphi \in C^0(\d M)$.  Let $F(t,z): \bR \times M \to \bR^+$ be a non-negative function which is non-decreasing in $t$ and $d\mu$-measurable in $z$. We consider the Dirichlet problem
\[\label{eq:Dirichlet} 
\begin{cases}
	u \in PSH(M, \om) \cap L^\infty(\ov M), \\
	(\om + dd^c u)^n = F(u, z)\mu, \\ 
	\lim_{z \to x} u(z) = \vphi(x) \quad\text{for } x\in \d M.
\end{cases}
\]
%In our previous paper we considered the case $F(t, z) = 1$ which is the complex Monge-Amp\`ere equation. Another important classes are $F(t,z) = e^{\la t }$ for $\la >0$. This is because it is related to the K\"ahler-Einstein metric equations and it plays an important role in the approximation process due to Berman \cite{?}.
A necessary condition to solve the Dirichlet problem is the existence of a subsolution, i.e., a function $\ul u \in PSH(M, \om)$  satisfying $\lim_{z\to x} u(z) = \vphi(x)$  for  $x\in \d M$ and
\[\label{eq:subsolution-bounded} 
	(\om + dd^c \ul u)^n \geq F(\ul u, z) \mu.
\]
We  say that $\mu$ is locally dominated by Monge-Amp\`ere measures of bounded plurisubharmonic functions if for each $p\in M$, there exists a  coordinate ball $B \subset \subset M$ centered at $p$ and $v \in PSH(B) \cap L^\infty(B)$ such that 
\[\label{eq:local-dominated}
	\mu_{|_B} \leq (dd^c v)^n.
\]
By the subsolution theorem from \cite{Ko95} on a smaller ball $B' \subset \subset B$  we can choose $v$ in the Cegrell class  $\cE_0(B')$, i.e., $\lim_{z\to \d B'} v(z) =0$ and $\int_{B'} (dd^cv)^n <+\infty$.

\begin{remark}
The latter property often follows from the existence of  a subsolution $\ul u$. For example in geometrically interesting cases: if either  $F(u,z)= e^{\la u}$ with $\la \in \bR$, or if  $\mu$ is the volume form. 
\end{remark}

Our main result is as follows.

\begin{thm} \label{thm:main}  Suppose that $F(t,z)$ is a bounded non-negative function which is continuous and non-decreasing in the first variable and $\mu$-measurable in the second one. Let $\mu$ be a positive Radon measure satisfying \eqref{eq:local-dominated}. Suppose that there exists a bounded subsolution $\ul u$  as in \eqref{eq:subsolution-bounded}. Then there exists a bounded solution to  the Dirichlet problem \eqref{eq:Dirichlet}. 
\end{thm}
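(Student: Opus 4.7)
My plan is to adapt the Perron envelope method from \cite{KN21} (which treated $F\equiv 1$) by using Theorem~\ref{thm:existence-local} as the local existence tool and Proposition~\ref{prop:uniqueness} as the comparison principle needed to absorb the $u$-dependence of $F$. First I would fix the continuous maximal $\om$-psh function $h\in C^0(\ov M)$ from \cite[Theorem~4.2]{KN1} solving $(\om+dd^c h)^n=0$ with $h=\vphi$ on $\d M$; Proposition~\ref{prop:uniqueness} applied to $\ul u$ and $h$ then yields $\ul u\leq h$ on $M$. Introduce the Perron family
$$
\cF = \left\{v\in PSH(M,\om)\cap L^\infty(\ov M): v\leq h,\ (\om+dd^c v)^n\geq F(v,z)\mu,\ \limsup_{z\to x}v(z)\leq\vphi(x)\ \forall x\in\d M\right\}.
$$
This family is non-empty ($\ul u\in\cF$) and stable under maximum: for $v_1,v_2\in\cF$, monotonicity of $F$ in its first argument gives $(\om+dd^c\max\{v_1,v_2\})^n \geq F(\max\{v_1,v_2\},z)\mu$ on each of the sets $\{v_i\geq v_{3-i}\}$.

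Set $u=(\sup\{v:v\in\cF\})^*$. By Choquet's lemma I would extract an increasing sequence $v_k\in\cF$ with $(\lim v_k)^*=u$. The monotone convergence theorem \cite{DK12} gives $(\om+dd^c v_k)^n\to(\om+dd^c u)^n$ weakly, while continuity of $F$ in the first variable, combined with boundedness of $F$ and dominated convergence, gives $F(v_k,z)\mu\to F(u,z)\mu$ weakly; hence $u$ is itself a subsolution, so $u\in\cF$. Since $\ul u\leq u\leq h$ and both $\ul u$ and $h$ attain the boundary values $\vphi$, so does $u$.

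For the interior equation, I would run a local balayage. Around any $p\in M$, by \eqref{eq:local-dominated} and the subsolution theorem \cite{Ko95} pick nested strictly pseudoconvex coordinate balls $B'\subset\subset B\subset\subset M$ and $\psi\in\cE_0(B')$ with $\mu|_{B'}\leq(dd^c\psi)^n$. Theorem~\ref{thm:existence-local} then produces $w\in PSH(B',\om)\cap L^\infty(\ov{B'})$ solving $(\om+dd^c w)^n=F(w,z)\mu$ on $B'$ with $w=u$ on $\d B'$; Proposition~\ref{prop:uniqueness} applied to $u$ (a subsolution) and $w$ yields $w\geq u$ on $B'$. The candidate lift $\wt u$ equal to $w$ on $B'$ and to $u$ on $M\setminus B'$ is then $\om$-psh on $M$ by the standard gluing (since $w\geq u$ on $B'$ with equality on $\d B'$), and lies in $\cF$, so $\wt u\leq u$. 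Combined with $w\geq u$ this forces $w=u$ on $B'$, and $u$ solves the equation near $p$.

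The main obstacle is the balayage step: the Perron envelope $u$ need not be continuous, so the Dirichlet data $u|_{\d B'}$ is only upper semicontinuous and Theorem~\ref{thm:existence-local} does not apply directly. I would handle this by approximating $u|_{\d B'}$ from above by continuous functions $\vphi_k\downarrow u|_{\d B'}$, solving the local Dirichlet problem with each $\vphi_k$ to obtain $w_k$, and applying Proposition~\ref{prop:uniqueness} to see that $\{w_k\}$ is decreasing to some $w$ which inherits both the correct boundary behavior and the equation via the monotone convergence theorem \cite{DK12} together with the Lebesgue monotone convergence theorem for the right-hand side. This is also the point at which the Hermitian comparison principle of \cite{KN1}, and not merely its K\"ahler counterpart, is genuinely needed.
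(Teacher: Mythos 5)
Your proposal follows the same Perron-envelope strategy as the paper: define the family of bounded $\om$-psh subsolutions with boundary values below $\vphi$, show it is stable under $\max$ via monotonicity of $F$, take the upper-regularized supremum, use Choquet's lemma together with the monotone convergence theorem of \cite{DK12} (and the fact that $\mu$ does not charge pluripolar sets, which follows from \eqref{eq:local-dominated}) to see that the envelope is itself a member of the family, and then identify the Monge--Amp\`ere measure by a local balayage built on Theorem~\ref{thm:existence-local}. Your treatment of the balayage step, approximating the merely usc boundary restriction from above by continuous data and passing to the decreasing limit, is exactly the content the paper delegates to \cite[Lemma~3.7]{KN21}, and your use of Proposition~\ref{prop:uniqueness} for the comparison $w\geq u$ on the coordinate ball is sound.

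The one genuine gap is the ``roof'' of the Perron family. You take the continuous maximal $\om$-psh function $h$ with $(\om+dd^c h)^n=0$ and $h=\vphi$ on $\d M$, citing \cite[Theorem~4.2]{KN1}; but that result produces $h$ only on a bounded strictly pseudoconvex domain in $\bC^n$, not on $\ov M$, whose boundary need not be pseudoconvex. Constructing such a global maximal $\om$-psh function is itself a degenerate instance of the theorem being proved, so this step is circular as written. You also apply Proposition~\ref{prop:uniqueness} (stated for open subsets of $\bC^n$) on all of $M$ to get $\ul u\leq h$. The paper avoids both problems by using instead the $\om$-\emph{sub}harmonic solution $u_0$ of the \emph{linear} equation $(\om+dd^c u_0)\wedge\om^{n-1}=0$ with $u_0=\vphi$ on $\d M$, supplied by Proposition~\ref{cor:linear-PDE} (proved directly via barriers in the appendix). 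Since every $\om$-psh function is $\om$-subharmonic, the elementary linear comparison principle yields $v\leq u_0$ for every $v$ in the family, and $\ul u\leq u\leq u_0$ then forces the boundary values. Replacing $h$ by $u_0$ repairs the gap; the rest of your argument then matches the paper's proof.
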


\begin{proof} We use the Perron envelope method as in \cite[Theorem~1.2]{KN21}. Let
$\cB (\vphi, \mu)$ be the set
\[\label{eq:class-B}
	 \left\{w \in PSH(M, \om) \cap L^\infty(M): (\om + dd^c w)^n \geq F(w,z)\mu, w^*_{|_{\d M}} \leq \vphi \right\},
\]
where  $w^*(x) = \limsup_{M \ni z\to x} w(z)$ for every $x\in \d M$.
Then, $\cB(\vphi, \mu)$ is non-empty as it contains $\ul u$. Let $u_0 \in C^0(\ov M)$ be a $\om$-subharmonic  solution to 
$$(\om + dd^c u_0) \wed \om^{n-1} = 0, \quad u_0 = \vphi\quad\text{on } \d M$$
(see e.g. Corollary~\ref{cor:linear-PDE}). 
By the comparison principle for $\om$-subharmonic functions we have 
\[\label{eq:basic-bound} v \leq  u_0 \quad\text{for every }v\in \cB(\vphi, \mu).\] 
Thus, the function
$$
	u (z) = \sup \{v(z) : v \in \cB(\vphi, \mu)\}
$$
is well-defined.
We know that $u^* \in PSH(M, \om) \cap L^\infty(\ov M)$ and $u= u^*$ almost everywhere, outside a pluripolar set. 
Moreover, if $v_1, v_2 \in \cB(\vphi, \mu)$, then  so is $\max\{v_1,v_2\}$. Indeed,  by an inequality of Demailly \cite{De85} we have
$$\begin{aligned}
	(\om + dd^c \max\{v_1, v_2\})^n 
&\geq 	{\bf 1}_{\{v_1>v_2\}} (\om + dd^c v_1)^n + {\bf 1}_{\{v_1\leq v_2\}} (\om + dd^c v_2)^n \\
&\geq 	{\bf 1}_{\{v_1>v_2\}} F(v_1, z) \mu + {\bf 1}_{\{v_1\leq v_2 \}} F(v_2, z) \mu \\
&=	F (\max\{v_1,v_2\},z)	\mu.
\end{aligned}$$
By this property and Choquet's lemma we can write $u= \lim_{j\to +\infty} u_j$,  where $\{u_j\}_{j\geq 1} \subset \cB(\vphi, \mu)$ is an increasing sequence. Therefore, 
$$\begin{aligned}
	(\om + dd^c u^*)^n 
&= 	\lim_{j \to +\infty} (\om + dd^c u_j)^n, \\
&\geq \lim_{j \to +\infty} F (u_j, z) \mu  \\
&=	F (u, z) \mu = F(u^*,z) \mu,
\end{aligned}$$
where the last equality follows the fact that $\mu$ does not charge  pluripolar sets.
Thus, $u=u^* \in \cB(\vphi,\mu)$ is $\om$-psh in $M$.  It also follows from the definition and \eqref{eq:basic-bound} that $\ul u\leq u \leq u_0$. Hence, $u= \vphi$ is continuous on $\d M$. 

It remains to show that $\om_u^n = F(u,z)\mu$ in $M$. To see this, let $B \subset \subset M$ be a coordinate ball in $M$. Following the same argument as in \cite[Lemma~3.7]{KN21}, given the local solvability of the Dirichlet problem (Theorem~\ref{thm:existence-local}), 
there exists $\wt u \in \cB(\vphi, \mu)$ such that $u \leq \wt u$ and $(\om+dd^c \wt u)^n = F(\wt u, z) \mu$ in this small coordinate ball.
By definition of $u$ we must have $\wt u \leq u$. So, $\wt u = u$ in $B$. In other words $\om_u^n = F(u, z) \mu$. Since $B$ is arbitrary, this proves our claim.
\end{proof}

For $F(t, z) = e^{\la t}$ with $\la>0$ we obtain a stronger statement.

\begin{cor}\label{cor:KE} Let $\la>0$. There exists  a unique  solution to the Dirichlet problem  
$$\begin{cases}
	u \in PSH(M, \om) \cap L^\infty(\ov M), \\
	\om_u^n = e^{\la u} \mu, \\
	\lim_{z\to x} u(z) = \vphi(x) \quad\text{for } x\in \d M.
\end{cases}$$  if and only if there exists a bounded subsolution. 

Moreover, if the subsolution is H\"older continuous, so is the solution. 
\end{cor}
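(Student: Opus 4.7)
The plan is to combine Theorem~\ref{thm:main} (for existence) with a strict-monotonicity comparison (for uniqueness), and then to adapt the Perron envelope construction to obtain H\"older continuity.

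\emph{Existence.} Although $F(t,z) = e^{\la t}$ is unbounded, any candidate solution is bounded above by the $\om$-harmonic extension $u_0$ of $\vphi$ (the barrier used in the proof of Theorem~\ref{thm:main}), so I would replace $F$ by the truncation $\tilde F(t,z) := e^{\la \min\{t, C\}}$ with $C := \sup u_0$ and apply Theorem~\ref{thm:main}. The local domination hypothesis on $\mu$ is automatic from the subsolution: on a small coordinate ball $B \subset M$ where $\om = dd^c\psi$ for a smooth strictly psh $\psi$, the subsolution inequality gives $(dd^c(\psi + \ul u))^n = \om_{\ul u}^n \geq e^{\la \inf_B \ul u}\mu$, so $\mu|_B$ is dominated by the Monge-Amp\`ere measure of the bounded psh function $\psi + \ul u$. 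The output $u$ of Theorem~\ref{thm:main} then satisfies $\ul u \leq u \leq u_0 \leq C$, so the truncation is inactive and $u$ solves the original equation.

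\emph{Uniqueness.} Let $u_1, u_2$ be two solutions and set $m := \sup_{\ov M}(u_2 - u_1)$; suppose for contradiction that $m > 0$. The translate $v := u_2 - m$ satisfies $v \leq u_1$ on $\ov M$ with equality at an interior point, since $u_1 = u_2$ on $\d M$. A direct computation gives
\[\notag
    \om_v^n = \om_{u_2}^n = e^{\la u_2}\mu = e^{\la m}\, e^{\la v}\mu,
\]
so on $U(s) := \{u_1 < v + s\}$ with $0 < s < m$ we have the strict inequality $\om_{u_1}^n \leq e^{\la(s - m)}\om_v^n$ with $e^{\la(s-m)} < 1$. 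Since $u_1 - v \to m$ near $\d M$, $U(s)$ lies compactly in $M$ for small $s$, hence inside a coordinate chart where a strictly psh auxiliary function is available. Running the perturbation argument of Proposition~\ref{prop:uniqueness} in that chart (replacing $v$ by $\hat v = (1+\tau)^{1/n}v + \rho$ to make it strictly $\om$-psh) together with Lemma~\ref{lem:local-CP} would yield
\[\notag
    \int_{U(s)}\om_v^n \leq (1 + O(s))\int_{U(s)}\om_{u_1}^n \leq (1+O(s))\, e^{\la(s-m)}\int_{U(s)}\om_v^n,
\]
which contradicts $\int_{U(s)}\om_v^n > 0$ for small $s$, since $e^{-\la m} < 1$. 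By symmetry, $u_1 = u_2$.

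\emph{H\"older regularity.} If $\ul u$ is H\"older continuous, then so is $\vphi$, and hence the harmonic majorant $u_0$ (by Corollary~\ref{cor:linear-PDE}). Following the envelope construction of Theorem~\ref{thm:main} together with the stability estimates of Section~\ref{sec:MA-local} and the H\"older techniques of \cite{KN21}, I would regularize $\ul u$ by H\"older-continuous strictly $\om$-psh sup-convolutions $\ul u_\veps$, build approximate solutions to the shifted equations with uniformly controlled H\"older seminorms, and pass to the limit. The strict monotonicity $\la > 0$ is the crucial feature here: a vertical translation of a potential by $t$ perturbs the right-hand side by the definite factor $e^{\la t}$, providing the quantitative gap needed to close comparison arguments that carry H\"older error terms. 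The main obstacle throughout is adapting the comparison principle (Lemma~\ref{lem:local-CP}) from bounded open sets in $\bC^n$ to the Hermitian manifold with boundary; this is resolved by localizing to coordinate charts where the relevant contact region is compactly contained, as in the uniqueness argument above.
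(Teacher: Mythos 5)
Your existence argument is correct and essentially the paper's: the truncation $\tilde F(t,z)=e^{\la\min\{t,C\}}$ to meet the boundedness hypothesis of Theorem~\ref{thm:main}, and the verification of \eqref{eq:local-dominated} from $\om_{\ul u}^n\geq e^{\la\inf_B\ul u}\mu$, are exactly the (unstated) details behind the paper's ``this is straightforward.'' The other two parts have genuine gaps. For uniqueness, your key idea --- the strict gap $e^{\la(s-m)}<1$ beating the $1+O(s)$ loss in the comparison principle --- is precisely the mechanism of \cite[Lemma~2.3]{Ng16}, which the paper simply cites. But your implementation fails at the step ``$U(s)$ lies compactly in $M$ \ldots hence inside a coordinate chart'': a set compactly contained in $M$ need not sit in a single chart, and there is no global strictly plurisubharmonic $\rho$ on a compact Hermitian manifold with boundary, so the substitution $\hat v=(1+\tau)^{1/n}v+\rho$ from Proposition~\ref{prop:uniqueness} is not available globally. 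The standard repair (and the one in \cite{Ng16}) is to replace $v$ by $(1-\eps)v+{\rm const}$, which satisfies $\om+dd^c((1-\eps)v)\geq\eps\om$ everywhere with no auxiliary function, at the cost of a factor $(1-\eps)^n$ on the measure that is absorbed by $e^{-\la m}<1$ once $\eps$ is small; one then applies the global comparison principle on $\{U(s)\}\subset\subset M$ rather than Lemma~\ref{lem:local-CP} verbatim.

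For the H\"older statement your sketch misses the short route and replaces it with a program whose hard step is exactly what is left unproved. The paper's argument is: since the solution $u$ is bounded, $e^{\la u}\leq C$, hence $\om_u^n=e^{\la u}\mu\leq C\,\om_{\ul u}^n$; so $u$ solves a Monge--Amp\`ere equation whose right-hand side is dominated by the Monge--Amp\`ere measure of a H\"older continuous $\om$-psh function, and the H\"older subsolution theorem \cite[Theorem~1.4]{KN21} (together with the boundary H\"older estimate of \cite[Lemma~6.5]{KN21}) applies directly. Your proposed scheme --- sup-convolution regularizations $\ul u_\veps$ and approximate solutions ``with uniformly controlled H\"older seminorms'' --- asserts rather than proves the uniform H\"older bound, which is the entire content of the claim; moreover sup-convolutions preserving $\om$-plurisubharmonicity on a Hermitian manifold with boundary are themselves delicate to set up. As written, this part of the proposal does not constitute a proof.
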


\begin{proof} 

The uniqueness of the solution under the hypothesis of the existence of a subsolution follows by  the same arguments as in \cite[Lemma~2.3]{Ng16}. 
Next, to prove the equivalence, we only need to verify the local domination  by Monge-Amp\`ere measures  of bounded plurisubharmonic functions  for $\mu$ (see condition \eqref{eq:local-dominated}). This is straightforward.

For the second conclusion we first observe as in \cite[Lemma~6.5]{KN21} that $u$ is H\"older continuous on the boundary $\d M$. Since $u$ is bounded,  $$\om_u^n = e^{\la u}\mu   \leq C (\om + dd^c \ul u)^n.$$  Thus, the H\"older continuity of $u$ follows from the the proof of \cite[Theorem~1.4]{KN21}. 
\end{proof}

Thanks to this we can also obtain the solution of the Monge-Amp\`ere equation as the  limit of the solutions of the Monge-Amp\`ere type equations.

\begin{cor} Suppose that there exists a function $\ul u \in PSH(M, \om) \cap L^\infty(M)$ which satisfies: $\lim_{z\to x}\ul u(z) = \vphi(x)$ for $x\in \d M$ and $(\om + dd^c \ul u)^n \geq \mu$ in $M$. Then,  the sequence of solutions
$$
	(\om + dd^c u_\la)^n = e^{\la u} \mu \quad\text{with } \quad \la >0
$$
converges to a solution $u$ of $\om_u^n = \mu$ and $u = \vphi - \sup_{M} \ul u$ on $\d M$ as $\la \to 0$.
\end{cor}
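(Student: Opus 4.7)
The plan is to normalize the given subsolution so that one function plays the role of subsolution for every $\la\ge 0$, apply the existence/uniqueness machinery already in place, and then pass to the limit by $L^1$-compactness combined with a local Monge-Amp\`ere stability argument.

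First, set $c:=\sup_M \ul u$, $\tilde\vphi:=\vphi - c$, and $\tilde{\ul u}:=\ul u - c \le 0$. Then $(\om+dd^c\tilde{\ul u})^n = (\om+dd^c \ul u)^n \ge \mu \ge e^{\la \tilde{\ul u}}\mu$ for every $\la\ge 0$, so $\tilde{\ul u}$ is a bounded subsolution with boundary datum $\tilde\vphi$ for each problem $(\om+dd^c v)^n = e^{\la v}\mu$. Corollary~\ref{cor:KE} then yields, for each $\la>0$, the unique $u_\la\in PSH(M,\om)\cap L^\infty(\ov M)$ solving $\om_{u_\la}^n = e^{\la u_\la}\mu$ with $u_\la|_{\d M}=\tilde\vphi$; Theorem~\ref{thm:main} (with uniqueness argued as in the proof of Corollary~\ref{cor:KE}) produces the MA solution $u$ with $\om_u^n=\mu$ and $u|_{\d M}=\tilde\vphi$. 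Letting $h\in C^0(\ov M)$ be the maximal $\om$-subharmonic function with boundary $\tilde\vphi$ (Corollary~\ref{cor:linear-PDE}), the $\om$-subharmonic comparison principle used in the proof of Theorem~\ref{thm:main} gives the $\la$-uniform sandwich $\tilde{\ul u}\le u_\la \le h$ on $\ov M$.

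Given any sequence $\la_k\to 0^+$, use the uniform $L^\infty$ bound and the compactness of bounded $\om$-psh families to extract a subsequence with $u_{\la_k}\to u_*$ in $L^1(M)$, where $u_*\in PSH(M,\om)\cap L^\infty(\ov M)$ still satisfies $\tilde{\ul u}\le u_* \le h$. Uniform boundedness also gives $e^{\la_k u_{\la_k}}\to 1$ uniformly on $\ov M$, so $e^{\la_k u_{\la_k}}\mu\to\mu$ as Radon measures. To identify $\om_{u_*}^n$, I would establish the weak convergence $\om_{u_{\la_k}}^n \to \om_{u_*}^n$ by a local argument: on any coordinate ball $B\Subset B'\Subset M$ the local-domination hypothesis furnishes a $v\in \cE_0(B')$ with $\mu|_{B'}\le (dd^c v)^n$. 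Writing $\om_{u_{\la_k}}^n=f_k\,\mu$ with uniformly bounded densities $f_k:=e^{\la_k u_{\la_k}}$, the argument of Theorem~\ref{thm:stability}, based on \cite[Lemma~3.5~(c) and Lemma~3.6]{KN21}, applies on $B$ and produces, along a further subsequence, $\om_{u_{\la_k}}^n \rightharpoonup \om_{u_*}^n$ weakly on $B$. Covering $M$ by such balls yields the global weak convergence, so $\om_{u_*}^n=\mu$ in $M$.

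Since $\tilde{\ul u}$ and $h$ are both continuous up to $\d M$ and equal $\tilde\vphi$ there, the sandwich forces $\lim_{z\to x}u_*(z)=\tilde\vphi(x)$ for every $x\in\d M$. Hence $u_*$ solves the Dirichlet problem for $\om_u^n=\mu$ with boundary data $\tilde\vphi$; by uniqueness $u_*=u$, and since every $L^1$-subsequential limit of $\{u_\la\}$ equals $u$, the full family satisfies $u_\la \to u$ in $L^1(M)$ as $\la\to 0^+$. The hard step is the weak Monge-Amp\`ere convergence in the previous paragraph: a direct monotonicity of $\la\mapsto u_\la$ would make it immediate via Bedford-Taylor monotone convergence, but the natural monotonicity needs the pointwise sign $u_\la\le 0$ on $\supp\mu$, which we do not have, since the maximal $\om$-subharmonic majorant $h$ may be strictly positive in the interior. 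The workaround is to invoke the local $L^1$-stability of \cite{KN21}, whose applicability on coordinate balls is precisely guaranteed by the hypothesis of local domination of $\mu$ by Monge-Amp\`ere measures of bounded plurisubharmonic functions.
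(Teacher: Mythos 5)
You take a genuinely different route from the paper. The paper's proof normalizes exactly as you do, but then asserts that by the domination principle the family $\la\mapsto u_\la$ is \emph{monotone}, and passes to the limit as $\la\to0$ using the monotone convergence theorems for the Monge--Amp\`ere operator. You replace monotonicity by $L^1$-compactness of the uniformly bounded family $\{u_\la\}$, uniform convergence $e^{\la u_\la}\to1$, and a local stability argument modeled on Theorem~\ref{thm:stability} (via the convergence lemmas of \cite{KN21}), made admissible by the local domination hypothesis on $\mu$. Your closing paragraph in fact pinpoints a genuine weakness of the monotone approach, which applies verbatim to the paper's proof: comparing $u_{\la_1}$ with $u_{\la_2}$ via the domination/comparison principle requires, on the relevant sub-level set, that $e^{\la_2 t}\le e^{\la_1 t}$ (or the reverse), i.e., a pointwise sign $u_\la\le0$; since the only available upper bound is the maximal $\om$-subharmonic majorant, which can be strictly positive in the interior when $\om$ is not K\"ahler, the sign condition is not automatic. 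So your compactness/stability route is the safer one here, and it is a real alternative, not a rephrasing.

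One step you should tighten: to upgrade subsequential to full-family convergence you invoke uniqueness of the bounded $\om$-psh solution of $\om_u^n=\mu$ with boundary data $\tilde\vphi$. The uniqueness argument you cite (Corollary~\ref{cor:KE}, following \cite[Lemma~2.3]{Ng16}) uses $\la>0$ in an essential way; for $\la=0$ on a compact Hermitian manifold with boundary you would need the corresponding comparison/uniqueness statement from \cite{KN1,KN21}, which this paper does not record. Either cite that result explicitly, or phrase the conclusion in terms of $L^1$-subsequential limits. Note that the paper's monotone-limit formulation, if justified, sidesteps uniqueness entirely, which is the one structural advantage of the approach you chose not to pursue.
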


\begin{proof} Let $b = \sup_{M} \ul u$. Then, $v:=\ul u - b \leq 0$ on $M$. For every $\la >0$, the function $v$ satisfies $v = \vphi - b$ on $\d M$ and 
$(\om + dd^c v)^n \geq e^{\la v} \mu$ in $M$.
Applying Theorem~\ref{thm:main} we obtain the family of solutions $\{u_\la\}_{0< \la \leq 1}$ of 
$$
	(\om +dd^c u_\la)^n = e^{\la u} \mu, \quad u_\la = \vphi - b \quad\text{on } \d M.
$$
By the domination principle the family is increasing in $\la>0$ and $u_\la \geq v$ for every $0<\la \leq 1$. Set $u = \lim_{\la \to 0} u_\la.$ Then, $u + b$ is a solution to $\om_u^n = \mu$ in $M$ and $u = \vphi$ on $\d M$.
\end{proof}

\section{Appendix}

In the proofs in this paper as well as in \cite{KN21} we use the existence and regularity of solutions of the linear elliptic equation
on a manifold with boundary. Those statements are known, for instance as consequences of general results  for harmonic maps \cite[Theorem~6]{JY93} (see also \cite[Theorem~5.3]{SY}). However, since the case of H\"older continuous boundary data seems not to be available in literature we include the proof here for the sake of completness.

Let $(\ov{M},\om)$ be a compact $n$-dimensional Hermitian  manifold with nonempty boundary $\d M$. Then $\ov{M} = M \cup \d M$, where $M$ is a (open) Hermitian manifold. Suppose that in local coordinate we have
$$
	\om = \ii g_{i\bar j} (z) dz^i \wed d\bar z^j.
$$
Define $\De_g = g^{\bar j i} \d_i \d_{\bar j}$ be the Laplace operator associated to $\om$ and denote by ${\rm dist}(z,w)$  the distance function induced by $\om$.

\begin{prop}\label{cor:linear-PDE} Let $\vphi \in C^0(\d M)$. Then, there exists a unique continuous solution to 
$$
	(\om + dd^c u) \wed \om^{n-1} =0\quad\text{in }M, \quad u= \vphi\quad\text{on } \d M. 
$$
Moreover, if $\vphi$ is H\"older continuous on $\d M$, then so is the solution $u$.
\end{prop}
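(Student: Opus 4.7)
The key observation is that $(\om + dd^c u) \wed \om^{n-1} = 0$ is equivalent to $dd^c u \wed \om^{n-1} = -\om^n$, which in local holomorphic coordinates reads as the linear elliptic equation $\Delta_g u = c$ for a fixed constant $c$. Thus the proposition reduces to the Dirichlet problem on $\ov M$ for the uniformly elliptic, smooth-coefficient operator $L := \Delta_g$ with bounded inhomogeneity. Uniqueness is immediate from the strong maximum principle.

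Existence for $\vphi \in C^0(\d M)$ I would obtain by Perron's method. Let $\cS(\vphi)$ denote the class of upper semicontinuous functions $v$ on $\ov M$ satisfying $L v \geq c$ in $M$ in the distributional sense and $v_{|\d M} \leq \vphi$. Set $u(z) := \sup\{v(z) : v \in \cS(\vphi)\}$. Standard Perron arguments --- the class is closed under pointwise max and under local harmonic replacement in small coordinate balls --- show that the upper semicontinuous regularization of $u$ is a continuous solution of $L u = c$ in $M$. Continuity up to the boundary at $x_0 \in \d M$ is secured by local barriers: using the smoothness of $\d M$, take a $C^2$ strictly plurisubharmonic defining function $\rho$ of $M$ near $x_0$ and build upper/lower barriers of the form $\vphi(x_0) \pm \veps + A\rho$, with $A$ chosen large relative to the modulus of continuity of $\vphi$ near $x_0$.

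For the H\"older conclusion, given $\vphi \in C^\al(\d M)$, I would extend $\vphi$ to $\Phi \in C^\al(\ov M)$ (possible via a finite coordinate atlas, a partition of unity, and the McShane--Whitney extension of H\"older functions), then locally mollify to obtain a sequence $\Phi_\veps \in C^\infty(\ov M)$ with $\|\Phi_\veps\|_{C^\al(\ov M)} \leq C \|\Phi\|_{C^\al(\ov M)}$ uniformly in $\veps$ and $\Phi_\veps \to \Phi$ uniformly. Classical Schauder theory yields smooth solutions $u_\veps$ of $L u_\veps = c$ in $M$ with $u_\veps = \Phi_\veps$ on $\d M$; the maximum principle then gives $u_\veps \to u$ uniformly on $\ov M$. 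Establishing a uniform bound $\|u_\veps\|_{C^\al(\ov M)} \leq C (\|\vphi\|_{C^\al(\d M)} + 1)$ transfers to $u$ by passing to the limit. This bound splits into an interior Schauder estimate away from $\d M$ and a boundary H\"older estimate which, after straightening $\d M$ in each coordinate chart, comes from the standard boundary H\"older estimate for linear uniformly elliptic operators with $C^\al$ Dirichlet data and $L^\infty$ right-hand side (cf.\ Gilbarg--Trudinger Ch.~8, or a direct Krylov-type barrier/oscillation argument). A partition of unity assembles the two estimates into the desired global bound.

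The main obstacle is the uniform boundary H\"older estimate. On the half-space with constant-coefficient Laplacian it is textbook, but in the Hermitian manifold setting one must carefully track how the constants depend on the localization, on $\om$, and on the diffeomorphism that straightens $\d M$, while also verifying that the mollification of $\Phi$ can be arranged to be $C^\al$-uniform on $\ov M$ (this is most conveniently done chart-by-chart after straightening). Everything else --- Perron's method, interior Schauder regularity, and the passage to the limit --- is routine once this boundary estimate is in place.
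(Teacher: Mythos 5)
Your reduction of $(\om+dd^cu)\wedge\om^{n-1}=0$ to the linear equation $\De_g u=c$ and the Perron construction for $C^0$ boundary data run exactly parallel to the paper, which splits off $\De_g u_2 = -n$, $u_2|_{\d M}=0$ as a smooth classical problem and handles $\De_g u_1=0$, $u_1|_{\d M}=\vphi$ by a Perron envelope over $\om$-subharmonic competitors with harmonic replacement.

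Where you diverge is the H\"older conclusion. The paper does not mollify: it constructs explicit H\"older-continuous local barriers of Bedford--Taylor type, $v(z)=k|\rho|^\tau(z)+c_1|z|^\al+\vphi(0)$ at each boundary point (with the key computation that the $-k\tau(1-\tau)|\rho|^{\tau-2}|\na\rho|_g^2$ term dominates the $|z|^{\al-2}$ growth near $\d M$), globalizes by truncation, infimizes over boundary points to get a single global $\De_g$-superharmonic $C^{0,\al}$ function $\ov u$ with $\ov u|_{\d M}=\vphi$, symmetrically a subharmonic $\ul u$, sandwiches the Perron solution to obtain $|u(x)-u(\xi)|\le C\,\mathrm{dist}(x,\xi)^\al$ for $\xi\in\d M$, and then propagates this to a global H\"older bound by a two-case argument (interior Schauder when $\mathrm{dist}(x,y)\le d_x/2$; triangle inequality via the nearest boundary points otherwise). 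Your route instead extends $\vphi$, mollifies to smooth data $\Phi_\veps$, solves classically, and passes to the limit, relying on a cited boundary H\"older a priori estimate. This is a legitimate alternative, but note two things. First, the mollification--limit step is a detour: the boundary H\"older estimate you invoke must itself be proved by a barrier/oscillation argument of precisely the kind the paper carries out, and once you have the barrier you can apply it directly to the Perron solution without ever touching smooth approximants. Second, the paper's explicit choice $|\rho|^\tau$ with $\tau\le\al$ is doing real work that a bare reference to GT Chapter~8 glosses over: the exponent and the constant $k$ are tuned so the barrier is uniformly superharmonic in a neighborhood of each boundary point with bounds independent of the point, which is exactly the uniformity you flag as the ``main obstacle.'' The paper's appendix exists precisely because, as the authors note, a citable H\"older-boundary statement in the Hermitian-manifold-with-boundary setting is not readily available; so if you take your route you should still write out that barrier, at which point your extra mollification apparatus becomes superfluous.
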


%\begin{proof} 
Since  $$\De_g u = {\rm tr}_\om u = \frac{ndd^c u \wed \om^{n-1}}{\om^n},$$
we can separate the equation into two problems
$\De_g u_1 = 0$ in $M$ with $u_1 = \vphi$ on $\d M$ and $\De_g u_2 = -n$ in $M$  with  $u_2 =0$ on $\d M$. The latter solution $u_2$ is smooth by the classical PDEs. 

We are thus reduced to proving the following.

\begin{prop}\label{prop:linear-PDE}
Let $\vphi \in C^0(\d M)$. Then, there exists a unique continuous solution to 
\[\label{eq:dirichlet}	
	\De_g u = 0 \quad\text{in M}, \quad u = \vphi \quad\text{on } \d M.
\]
Moreover, if $\vphi$ is H\"older continuous on $\d M$, then so is $u$. 
\end{prop}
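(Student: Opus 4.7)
The plan is to treat \eqref{eq:dirichlet} as the Dirichlet problem for a linear uniformly elliptic second-order operator with smooth coefficients on a smooth domain, exploiting the maximum principle together with classical barrier constructions. Uniqueness follows immediately from the strong maximum principle, since $\De_g$ has no zeroth-order term. For smooth data $\vphi \in C^\infty(\d M)$, one extends $\vphi$ to some $\Phi \in C^\infty(\ov M)$ and reduces the problem to $\De_g v = -\De_g \Phi$ with $v = 0$ on $\d M$, for which existence of a smooth solution is a classical result of linear elliptic theory. To handle $\vphi \in C^0(\d M)$, I would approximate $\vphi$ uniformly by smooth $\vphi_j$ and let $u_j$ be the corresponding smooth solutions; applying the maximum principle to $\De_g(u_j - u_k) = 0$ gives $\|u_j - u_k\|_{L^\infty(\ov M)} \le \|\vphi_j - \vphi_k\|_{L^\infty(\d M)}$, so $\{u_j\}$ is Cauchy in $C^0(\ov M)$ and converges uniformly to a continuous solution $u$, smooth inside $M$ by interior elliptic regularity.

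For H\"older continuity up to the boundary when $\vphi \in C^{0,\al}(\d M)$, the crucial ingredient is the construction of two-sided H\"older barriers at each boundary point $x_0 \in \d M$. Using the smoothness of $\d M$, pick a coordinate neighborhood $U$ of $x_0$ and a smooth defining function $\rho$ with $\rho < 0$ in $U \cap M$, $\rho = 0$ on $\d M \cap U$, and $|\na \rho| \ge c > 0$. Writing $r(z) = {\rm dist}(z, x_0)$, I would consider
$$
w^\pm(z) \;=\; \vphi(x_0) \;\pm\; \bigl( K\, r(z)^\be \,-\, L\, \rho(z) \bigr),
$$
where $0 < \be \le \al$ and $K,L > 0$ are chosen so that on $U \cap M$ one has $\De_g w^+ \le 0 \le \De_g w^-$, on $\d M \cap U$ one has $w^- \le \vphi \le w^+$ (using $|\vphi(z) - \vphi(x_0)| \le [\vphi]_\al\, r(z)^\al$), and on $\d U \cap M$ one has $w^- \le u \le w^+$ (achievable by taking $L$ large, since $-\rho$ is bounded below there). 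The maximum principle then forces $|u(z) - \vphi(x_0)| \le K\, r(z)^\be$ near $x_0$, giving boundary H\"older regularity with exponent $\be$. Combined with interior Schauder estimates via a standard scaling dichotomy (treating two points according to whether their distance is small or large relative to the distance to $\d M$), this produces the desired global H\"older estimate on $\ov M$.

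The main obstacle is verifying the sign of $\De_g w^\pm$ in the Hermitian metric. In local coordinates one computes
$$
\De_g (r^\be) \;=\; \be\, r^{\be-2}\bigl[(\be - 2)\, g^{\bar j i}\, \d_i r\, \d_{\bar j} r \;+\; r\, \De_g r\bigr],
$$
so for small $\be$ the leading term is strictly negative and dominates, giving $\De_g(r^\be) \le -c\, r^{\be - 2}$ near $x_0$. Since $\De_g \rho$ is only bounded, the contribution $-L \De_g \rho$ is absorbed by this singular negative term close to $x_0$, while away from $x_0$ within $U$ the smooth quantities $r^{\be - 2}$ and $\De_g \rho$ are controlled on compact subsets and everything can be dominated by taking $K$ sufficiently large relative to $L$. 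The construction is otherwise the standard boundary barrier technique for linear elliptic PDE on smooth domains, so aside from this local verification the argument is routine.
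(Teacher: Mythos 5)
Your existence argument (smooth approximation of the boundary data plus the maximum principle, instead of the paper's Perron envelope) is a perfectly reasonable alternative and would work. The global H\"older argument via interior Schauder estimates and the near-boundary/far-from-boundary dichotomy also matches the paper's structure. The problem is the barrier: your $w^\pm$ does not satisfy the required differential inequality, and the error is not a technicality but reflects the wrong choice of singular profile.

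The claimed inequality $\De_g(r^\be)\le -c\,r^{\be-2}$ near $x_0$ is false. First, the coefficient in brackets should be $(\be-1)$, not $(\be-2)$: $\De_g(r^\be)=\be\,r^{\be-2}\bigl[(\be-1)\,g^{\bar j i}\d_i r\,\d_{\bar j}r+r\,\De_g r\bigr]$. More seriously, the two terms in the bracket are of the same order as $r\to 0$ and their \emph{sum is positive}. Working in normal coordinates so that $\om$ is Euclidean at $x_0$, one has $g^{\bar j i}\d_i r\,\d_{\bar j}r=\tfrac14$ and $r\,\De_g r\to\tfrac{2n-1}{4}$, so that
\[
\De_g(r^\be)=\frac{\be(\be+2n-2)}{4}\,r^{\be-2}\bigl(1+O(r)\bigr),
\]
which is \emph{positive} for every $0<\be<2n$; in other words, the fractional power of the distance to a point is $\De_g$-subharmonic, not superharmonic. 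Consequently $\De_g w^+ = K\,\De_g(r^\be)-L\,\De_g\rho\sim cK\,r^{\be-2}\to+\infty$ as $r\to 0$ (the $-L\De_g\rho$ term is merely bounded and cannot cancel a singularity), so $w^+$ fails to be $\De_g$-superharmonic precisely where the barrier is needed. The maximum principle then gives nothing.

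What saves the construction is to take a fractional power of the distance to the \emph{boundary} rather than to the point: the paper's barrier is $k|\rho|^\tau+c_1|z|^\al+\vphi(0)$. The relevant trace computation is
\[
\De_g(|\rho|^\tau)=-\tau|\rho|^{\tau-1}\De_g\rho-\tau(1-\tau)|\rho|^{\tau-2}|\na\rho|_g^2,
\]
and here the analogue of the $r\,\De_g r$ term, namely $|\rho|\,\De_g\rho$, stays \emph{bounded} near $\d M$ because $\rho$ is smooth up to the boundary and $\De_g\rho$ has no singularity; hence the negative $(1-\tau)|\rho|^{\tau-2}|\na\rho|_g^2$ term genuinely dominates. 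Combined with $|\rho(z)|\le c_2|z|$, this negative term is $\gtrsim |z|^{\tau-2}$ and absorbs, for $0<\tau\le\al$ and $k$ large, the positive contribution $\sim |z|^{\al-2}$ coming from $c_1|z|^\al$. So the barrier should keep a term like your $Kr^\be$ to match $\vphi$ along $\d M$, but the superharmonicity must be supplied by $k|\rho|^\tau$ with $0<\tau<1$, not by $-L\rho$; your linear-in-$\rho$ term is not singular enough to do the job.
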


The existence of continuous solutions follows exactly  as in \cite[page 24-25]{GT98} by using the Perron envelope 
$$
	\cS_\vphi = \{ v \in SH_\om(M) \cap C^0(\ov M): v_{|_{\d M}} \leq \vphi\},
$$
where $SH_\om(M)$ is the set of all $\De_g$-subharmonic functions in $M$. The function $$u(x) = \sup_{v \in S_\vphi} v(x) \quad \text{for } x\in \ov M$$ is the solution to \eqref{eq:dirichlet}  by the Perron method using harmonic liftings:
\begin{lem}  Let $v \in S_\vphi$. Then there exists a function $\wt v$ called a lift of $v$ in $B$ such that $\wt v \geq v$ on $\ov M$ and  satisfying $\De_g \wt v = 0$ in $B$ and $\wt v= v$ on $\d B$.
\end{lem}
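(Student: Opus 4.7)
The plan is to perform the classical Poisson modification: choose $B$ to be a small coordinate ball compactly contained in $M$ with smooth boundary, solve the Dirichlet problem for $\De_g$ on $B$ with boundary values $v|_{\d B}$, and then paste this solution together with $v$ outside $B$. Since $v \in \Sc_\vphi$ is continuous on $\ov M$ and $B \subset\subset M$, the restriction $v|_{\d B}$ is a continuous function on a smooth hypersurface. The coefficients $g^{\bar j i}$ of $\De_g$ are smooth on a neighborhood of $\ov B$, and on a sufficiently small ball the operator is uniformly elliptic in the chart.

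The first step is to construct a function $w \in C^0(\ov B) \cap C^\infty(B)$ with $\De_g w = 0$ in $B$ and $w = v$ on $\d B$. This is a standard consequence of elliptic theory: since $\d B$ satisfies a uniform exterior sphere condition, one builds barriers at every boundary point and applies Perron's method for the linear operator $\De_g$, or alternatively approximates $v|_{\d B}$ uniformly by smooth functions $\vphi_k$, solves $\De_g w_k = 0$ in $B$ with $w_k = \vphi_k$ on $\d B$ using classical Schauder theory, and passes to the uniform limit $w = \lim w_k$ via the maximum principle (which yields $\|w_k - w_\el\|_{L^\infty(B)} \leq \|\vphi_k - \vphi_\el\|_{L^\infty(\d B)}$). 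Either route gives a continuous harmonic extension.

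Next, define
\[ \notag
	\wt v(x) = \begin{cases} w(x) & \text{if } x\in \ov B, \\ v(x) & \text{if } x\in \ov M \setminus B. \end{cases}
\]
Since $w = v$ on $\d B$, $\wt v$ is well-defined and continuous on $\ov M$, and the properties $\De_g \wt v = 0$ in $B$ and $\wt v = v$ on $\d B$ hold by construction. To verify $\wt v \geq v$ on $\ov M$, note that outside $B$ equality holds, while on $\ov B$ the difference $v - w$ is $\De_g$-subharmonic (as $v$ is $\De_g$-subharmonic on $M$ and $w$ is harmonic) with $v - w = 0$ on $\d B$; the weak maximum principle for $\De_g$ on $\ov B$ then forces $v - w \leq 0$ throughout $B$.

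The only genuine obstacle is the solvability of the linear Dirichlet problem on $B$ with merely continuous boundary data; everything else is a straightforward maximum-principle argument. This obstacle is classical for uniformly elliptic operators with smooth coefficients once one has the exterior sphere condition at every boundary point of $B$, and thus poses no real difficulty here. No Hermitian or pluripotential-theoretic machinery enters into this lemma — it is purely a linear PDE fact about $\De_g$.
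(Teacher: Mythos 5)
Your proof is correct and follows essentially the same route the paper intends: solve the linear Dirichlet problem for $\De_g$ on a coordinate ball $B \subset\subset M$ (using either barriers or approximation by smooth boundary data plus Schauder estimates), paste the harmonic solution $w$ with $v$ outside $B$, and use the weak maximum principle to obtain $\wt v \geq v$. The paper simply refers to the classical Perron argument in Gilbarg--Trudinger for this standard fact and does not spell out the details, so your write-up fills in exactly what the reference covers. One small remark: for the lift to be used in the Perron method one also needs $\wt v$ to remain $\De_g$-subharmonic on all of $M$ (so that $\wt v \in \cS_\vphi$); this follows from the very inequality $\wt v \geq v$ you established together with the standard gluing criterion for subsolutions, but since the lemma as stated lists only the three properties you proved, your argument is complete for the statement given.
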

The boundary condition is satisfied since using the regularity of the boundary of the domain one can easily construct, as in   \cite{GT98}, first the local barriers, and then the global one.

We now assume further that $\vphi$ belongs to $C^{0,\al}(\ov M)$ with $0<\al <1$. Then, we wish to show that the solution also belongs to a H\"older space.

First, we will construct a H\"older continuous local barrier similarly as in \cite[Theorem~6.2]{BT76} on the coordinate half-ball at each boundary point.

\begin{lem}\label{lem:point-barrier} Suppose the origin $0 \in \d M \cap B(0,R) $,  $\rho$ is the defining function of $\d M \cap B(0,R)$ in the coordinate ball $B(0,R)$ and
$U_R = \{ z\in B(0,R): \rho (z)\leq 0 \}$ is the coordinate half-ball centered at $0$. Denote $\|\vphi\|_{\al}=c_1$ for the H\"older norm of $\vphi$ on $\d M$.  Let $0<\tau \leq \al < 1$. Then, there exists a constant $k = k(\vphi, U_R)$ and a neighborhood $W$  of $0$ such that the function
$$
	v(z) = k |\rho|^\tau(z) + c_1  |z|^{\al} + \vphi(0)
$$
is $\De_g$-superharmonic in $W \cap U_R$. Moreover, 
$$
	v(0) = \vphi(0), \quad v(x) \geq \vphi(x) \quad\text{for every } x\in \d M \cap B(0,R). 
$$
\end{lem}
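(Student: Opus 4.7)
The plan is to verify the two boundary conditions directly and then establish $\De_g v \leq 0$ on a small enough neighborhood of $0$. The boundary conditions are immediate: $v(0) = \vphi(0)$ since $\rho(0) = 0 = |0|^\al$; and for $x \in \d M \cap B(0,R)$ one has $\rho(x) = 0$, hence $v(x) - \vphi(x) = c_1|x|^\al - (\vphi(x) - \vphi(0)) \geq 0$ by the H\"older bound $|\vphi(x)-\vphi(0)| \leq c_1 |x|^\al$.

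For the superharmonicity, the key computation uses the chain rule $\De_g h(\rho) = h''(\rho)|\d\rho|_g^2 + h'(\rho)\De_g\rho$, where $|\d\rho|_g^2 := g^{\bar j i}\d_i\rho\,\d_{\bar j}\rho$. Applied to $h(t) = (-t)^\tau$ this gives
\[
\De_g(-\rho)^\tau = -\tau(1-\tau)(-\rho)^{\tau-2}|\d\rho|_g^2 - \tau(-\rho)^{\tau-1}\De_g\rho.
\]
Since $\rho$ is a smooth defining function with non-vanishing gradient at $0$, after shrinking $R$ we may assume $|\d\rho|_g^2 \geq c_0 > 0$ on $U_R$. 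On the other hand $|z|^\al$ is plurisubharmonic on $\bC^n$ (being $e^{\al\log|z|}$), smooth away from the origin, and an elementary computation yields $\De_g|z|^\al \leq C_1 |z|^{\al-2}$ on $B(0,R)\setminus\{0\}$ for a constant $C_1 = C_1(\om,\al)$. Combining these estimates,
\[
\De_g v \leq -k\tau(1-\tau)c_0(-\rho)^{\tau-2} + k\tau(-\rho)^{\tau-1}|\De_g\rho| + c_1 C_1 |z|^{\al-2}.
\]

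The task is then to show that the dominant negative term of order $(-\rho)^{\tau-2}$ absorbs the other two. Dividing through by $(-\rho)^{\tau-2}$, the desired inequality $\De_g v \leq 0$ becomes
\[
k\tau(1-\tau)c_0 \geq k\tau(-\rho)|\De_g\rho| + c_1 C_1\, |z|^{\al-2}(-\rho)^{2-\tau}.
\]
Here the hypothesis $\tau \leq \al$ enters decisively: smoothness of $\rho$ with $\rho(0)=0$ gives $(-\rho) \leq C_2|z|$ near the origin, whence $|z|^{\al-2}(-\rho)^{2-\tau} \leq C_2^{2-\tau}|z|^{\al-\tau}$, which is bounded on $B(0,R)$ precisely because $\al - \tau \geq 0$. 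One then shrinks the neighborhood $W$ of $0$ so that $(-\rho)|\De_g\rho| \leq (1-\tau)c_0/2$ on $W \cap U_R$ (possible since the left side vanishes at $0$), and finally chooses $k$ large, depending on $c_0, c_1, \tau, \al, C_1, C_2$ and $R$, to make the remaining term $c_1 C_1 C_2^{2-\tau} R^{\al-\tau}$ at most $k\tau(1-\tau)c_0/2$.

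The main obstacle is the bookkeeping of exponents to see why $\tau \leq \al$ is exactly the correct hypothesis: the concavity of $t\mapsto -(-t)^\tau$ produces a blow-up of $\De_g(-\rho)^\tau$ of order $(-\rho)^{\tau-2}$ transverse to $\d M$, while $\De_g|z|^\al$ blows up only of order $|z|^{\al-2}$ at the origin; since $|\rho| \lesssim |z|$ near $0$, the constraint $\tau \leq \al$ is precisely what makes the former outpace the latter. \emph{A posteriori}, the neighborhood $W$ and the constant $k$ depend on $\vphi$ only through $c_1 = \|\vphi\|_\al$, so the barrier is uniform in the data.
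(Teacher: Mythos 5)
Your proof is correct and follows essentially the same route as the paper: compute $\De_g$ of each term, use $|\rho(z)|\le C_2|z|$ together with $\tau\le\al$ to see that the negative term of order $|\rho|^{\tau-2}$ dominates the $|z|^{\al-2}$ term, and choose $k$ large. If anything, you are slightly more careful than the paper in explicitly absorbing the first-order term $\tau(-\rho)^{\tau-1}\De_g\rho$ (which the paper drops without comment) by shrinking the neighborhood $W$.
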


\begin{proof} We compute in $B(0,R)$,
$$
	dd^c |\rho|^\tau = - \tau |\rho|^{\tau-1} dd^c \rho - \tau (1-\tau) |\rho|^{\tau-2} d\rho\wed d^c \rho,
$$
and for $\al' = \al/2$,
$$
	dd^c |z|^{2\al'} = \al' |z|^{2(\al'-1)} dd^c |z|^2 - \al'(\al'-1) |z|^{2(\al'-2)} d |z|^2 \wed d^c|z|^2.
$$
Hence,
$$dd^c v (z) \wed \om^{n-1}/\om^n \leq - k\tau(1-\tau) |\rho|^{\tau-2} |\na \rho|_g^2 + \frac{c_1 \al}{2} |z|^{\al-2}. $$
Furthermore, $|\rho(z)|= |\rho(z) - \rho(0)| \leq c_2 |z|$ for every $z \in \ov{B(0,R) }$. Since $\tau-2<0$, it implies that 
$$ \begin{aligned}
	\frac{1}{n} \De_g v (z) 
&\leq - c_3 k |z|^{\tau-2} |\na \rho|_g^2 + \frac{c_1 \al}{2} |z|^{\al-2}  \\
&= |z|^{\al-2} \left( c_1 \al/2 - c_3 k |z|^{\tau-\al} |\na \rho|_g^2\right) \\
&\leq  |z|^{\al-2} \left( c_1 \al - c_3 k R^{\tau-\al} |\na \rho|_g^2\right),
\end{aligned}$$
where $c_3 = \tau (1-\tau) c_2^{\tau-2}$. We used the fact $\tau \leq \al$ for the last inequality. Since $\rho$ is the defining function for $\d M \cap B(0,R)$, it follows that 
$|\na \rho|_g > \veps_0 >0$ in a neighborhood $W$ of $0$.  Then, we can choose large $k>0$, independent of the boundary point $0$,  so that $\De_g v \leq 0$. Finally, 
the remaining properties of $v$ hold because $\rho(x) =0$ for every $x\in \d M \cap B(0,R)$. 
\end{proof}

\begin{prop} There exists a function $\ov u(x) \in C^{0,\al}(\ov M)$ that is $\De_g$-superharmonic  in $M$ and $\ov u(\xi) = \vphi(\xi)$ for $\xi \in \d M$.
\end{prop}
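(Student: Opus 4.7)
The plan is to construct $\ov u$ as the infimum over $\d M$ of a family of globally defined, uniformly H\"older $\De_g$-superharmonic barriers obtained by globalizing Lemma~\ref{lem:point-barrier}.

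First I would fix a smooth global defining function $\rho\in C^\infty(\ov M)$ for $\d M$ with $\rho<0$ on $M$ and $|\nabla\rho|_g>0$ on $\d M$. Using compactness of $\d M$, cover it by finitely many coordinate half-balls on which Lemma~\ref{lem:point-barrier} applies with a uniform radius $R$. Inspecting that proof, the constants $c_2, c_3$ and the lower bound $\veps_0$ on $|\nabla\rho|_g$ can be chosen uniformly in the base point, so there exists a single $k>0$ and a single $\delta_0>0$ such that, for every $\xi\in\d M$,
$$
h_\xi(x) := \vphi(\xi) + k\,|\rho(x)|^\al + c_1\, d(x,\xi)^\al
$$
is $\De_g$-superharmonic in the fixed tubular neighborhood $U_{\delta_0}=\{-\delta_0<\rho<0\}$. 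Here $c_1=\|\vphi\|_{C^{0,\al}(\d M)}$, $\tau=\al$, and $d$ is the Riemannian distance of $\om$. Note $h_\xi$ extends to an $\al$-H\"older function on $\ov M$ with H\"older norm independent of $\xi$.

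Next, I would extend local superharmonicity to global superharmonicity by absorbing the bounded positive part of $\De_g h_\xi$ on $M\setminus U_{\delta_0}$. Let $\phi\in C^\infty(\ov M)$ be the smooth solution of $\De_g\phi=-n$ in $M$, $\phi=0$ on $\d M$ (mentioned in the discussion preceding Proposition~\ref{prop:linear-PDE}); the maximum principle yields $\phi\geq 0$. Away from $\d M$ and away from $\xi$ the function $h_\xi$ is smooth in $x$, and on $M\setminus U_{\delta_0}$ we have $|\rho|\geq\delta_0$ and $d(x,\xi)\geq\delta_0/C$ for some $C$ depending on $\om$, so $\De_g h_\xi$ is bounded there by a constant independent of $\xi$. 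Hence for $A$ large enough (depending only on $k$, $c_1$, $\om$, $\rho$) the function
$$
H_\xi := h_\xi + A\phi
$$
satisfies $\De_g H_\xi\leq 0$ on all of $M$; it is H\"older continuous on $\ov M$ with H\"older norm bounded uniformly in $\xi\in\d M$, and $H_\xi=h_\xi$ on $\d M$.

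Finally, set $\ov u(x):=\inf_{\xi\in\d M}H_\xi(x)$. The uniform H\"older estimates on the family $\{H_\xi\}$ directly yield $\ov u\in C^{0,\al}(\ov M)$, and the pointwise infimum of continuous $\De_g$-superharmonic functions is $\De_g$-superharmonic, so $\ov u$ is $\De_g$-superharmonic on $M$. For the boundary values, fix $\eta\in\d M$: taking $\xi=\eta$ gives $\ov u(\eta)\leq H_\eta(\eta)=\vphi(\eta)$; conversely, for every $\xi\in\d M$,
$$
H_\xi(\eta)=\vphi(\xi)+c_1\,d(\eta,\xi)^\al\geq\vphi(\eta)
$$
by the H\"older continuity of $\vphi$, so $\ov u(\eta)\geq\vphi(\eta)$. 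Thus $\ov u=\vphi$ on $\d M$.

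The main obstacle is the uniform choice of $k$ and the passage from the neighborhood $W\cap U_R$ where the barrier in Lemma~\ref{lem:point-barrier} is superharmonic to a \emph{fixed} tubular neighborhood of $\d M$, followed by the globalization via $A\phi$: one must verify that all geometric constants (the gradient lower bound $\veps_0$, the Lipschitz constant $c_2$ of $\rho$, the cross terms in $d(\cdot,\xi)^\al$ passing from Euclidean coordinates to $\om$-geometry) can be made uniform in $\xi\in\d M$ while keeping the H\"older norm of $H_\xi$ under control.
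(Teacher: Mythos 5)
Your overall architecture is the same as the paper's: build, for each $\xi\in\d M$, a global $\De_g$-superharmonic barrier $v_\xi\in C^{0,\al}(\ov M)$ with $v_\xi(\xi)=\vphi(\xi)$, $v_\xi\geq\vphi$ on $\d M$, and H\"older norm uniform in $\xi$, then take $\ov u=\inf_\xi v_\xi$. The difference is in how the local barrier from Lemma~\ref{lem:point-barrier} is globalized, and here your proposal has a genuine gap.

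The paper globalizes by truncation: with $\vphi(\xi)$ normalized to $0$, it sets $\wh v=\min\{k_1,\,k_2 v\}$ where $k_1=\sup_{\d M}\vphi+1$ and $v=k|\rho|^\al+c_1|z|^\al$ is the local barrier in a coordinate half-ball. Since $v$ is bounded away from $0$ on the spherical part of that half-ball, $k_2 v>k_1$ there for large $k_2$, so $\wh v\equiv k_1$ near that part and extends by the constant $k_1$ to all of $M$. Crucially, this needs superharmonicity of the barrier only inside the coordinate chart, where the computation in Lemma~\ref{lem:point-barrier} is explicit, and the global extension is a constant, which is trivially $\De_g$-superharmonic. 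No control of $\De_g$ of the barrier outside the chart is required, and the min of two superharmonic functions is superharmonic, so no gluing estimate is needed.

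Your additive globalization $H_\xi=h_\xi+A\phi$ needs a one-sided bound on $\De_g h_\xi$ on \emph{all} of $M\setminus U_{\delta_0}$, and this is where the argument breaks. You assert that $h_\xi$ is smooth away from $\xi$ and $\d M$; this is false, because $d(\cdot,\xi)$ is only Lipschitz and fails to be smooth on the cut locus of $\xi$, which on a compact manifold with boundary can be a large set and can approach $\d M$. One could try to salvage this with a Laplacian comparison bound $\De_g d(\cdot,\xi)\leq C$ in the barrier sense, but that is exactly the kind of technical input you would have to supply (and, on a manifold with boundary, minimizing geodesics from a boundary point $\xi$ may touch $\d M$, so the standard interior comparison argument does not apply verbatim). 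Separately, near $\xi$ you invoke Lemma~\ref{lem:point-barrier}, but that lemma establishes superharmonicity of $k|\rho|^\tau+c_1|z|^\al$ with the \emph{coordinate Euclidean} $|z|$, not with the Riemannian $d(\cdot,\xi)$; replacing $|z|^\al$ by $d(\cdot,\xi)^\al$ changes the $dd^c$ computation, and you flag but do not resolve this. In short: the truncation $\min\{k_1,k_2v\}$ is precisely the device that lets the paper avoid having to say anything about $\De_g$ of a globally defined distance-type term, and dropping it creates the need for exactly the estimates you have left unjustified.
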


\begin{proof} We first show that at each point $\xi \in \d M$ there exist a $\De_g$-superhamonic function $v_\xi$ in $M$ such that  $v_\xi \in C^{0,\al}(\ov M)$  and
\begin{align} 
&\label{eq:point-p1}	v_{\xi} (x) \geq \vphi(x) \quad\text{on } \d M, \\
&\label{eq:point-p2}	v_\xi (\xi)  = \vphi(\xi), \\
&\label{eq:point-p3}	\|v_\xi\|_{\al} \leq C \|\vphi\|_{2\al},
\end{align}
where $\| \cdot \|_\al$ denotes the $\al$-H\"older norm of the function and the constant $C$ depends only on $\ov M$ and the metric $\om$.

Without loss of generality we may assume $\xi$ is the origin and $\vphi(0) =0$.
By Lemma~\ref{lem:point-barrier} for the boundary point $0 \in \d M$ there exists a function $v = k |\rho|^\al + c_1 |z|^{\al}$ and a neighborhood $W$ of $0$ such that  $v$ satisfies \eqref{eq:point-p1}, \eqref{eq:point-p2} and \eqref{eq:point-p3} on $W \cap U_R$. Note that the constants $k,c_1$ are independent of the boundary points. We can extend this function to a global one as follows. Set $k_1 = \sup_{\d M} \vphi+1$. Then, for large $k_2\geq 1$ (depending on $k_1$ and $R$, but independent of the boundary point), the function
$$
	\wh v = \min\{k_1, k_2  v\}
$$
is $\De_g$-superharmonic on $M$ and satisfies the list of required properties. 

Now let us define
$$
	\ov u = \inf \{ v_\xi : \xi \in \d M\}. 
$$
From $|v_\xi(z) - v_\xi (w)| \leq C[{\rm dist}(z,w)]^\al$ we deduce that $|\ov u(z) -\ov u(w)| \leq  C[{\rm dist}(z,w)]^\al$. Thus, $\ov u$ is H\"older continuous $\De_g$-superharmonic, and clearly $\ov u(\xi) = \vphi (\xi)$ for every $\xi \in \d M$. This completes the proof.
\end{proof}

By the similar argument we can find a global H\"older continuous $\De_g$-subharmonic barrier $\ul u\in C^{0,\al}(\ov M)$  such that 
\[\label{eq:sub-barrier}\ul u(\xi) = \vphi(\xi) \quad\text{for }\xi \in \d M.\] 
Hence, by the maximum principle,
$\ul u \leq u \leq \ov u$  on  $\ov M.$
Consequently, we get 
\[\label{eq:holder-boundary} |u(x) - u(\xi)| \leq C [{\rm dist}( x, \xi)]^\al \quad\text{for every } x\in \ov M \text { and } \xi \in \d M.
\]
Now we are going to show the global H\"older continuity of the solution.

\begin{lem} There exists a constant $C = C(\vphi, M, \om)$ such that
\[\label{eq:holder-norm}  
	|u(x) - u(y)| \leq C [{\rm dist}(x,y)]^\al \quad\text{for every } x, y \in \ov M.
\]
\end{lem}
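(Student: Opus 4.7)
The plan is to bootstrap the boundary H\"older estimate \eqref{eq:holder-boundary} to a global one by pairing it with the standard interior gradient bound for $\Delta_g$-harmonic functions. Since $\ov M$ is compact, in a finite cover by coordinate charts the operator $\Delta_g = g^{\bar j i}\d_i\d_{\bar j}$ is smooth and uniformly elliptic, so classical elliptic regularity yields the interior gradient estimate
$$
\sup_{B(x,R/2)} |\nabla w| \leq \frac{C}{R} \sup_{B(x,R)} |w|
$$
for every $w$ with $\Delta_g w = 0$ on a coordinate ball $B(x,R) \subset M$, with $C$ depending only on $(\ov M, \om)$. I will use this to convert \eqref{eq:holder-boundary} into an oscillation bound on interior balls.

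Fix $x,y\in \ov M$, put $r := \min\{{\rm dist}(x,\d M),\,{\rm dist}(y,\d M)\}$, and, swapping $x$ and $y$ if necessary, assume the minimum is attained at $x$. Choose $\xi\in\d M$ with ${\rm dist}(x,\xi)=r$. I then split into two regimes. In the \emph{far} regime ${\rm dist}(x,y)\geq r/2$, applying \eqref{eq:holder-boundary} at $\xi$ to both $x$ and $y$ gives
$$
|u(x)-u(y)| \leq |u(x)-\vphi(\xi)| + |u(y)-\vphi(\xi)| \leq C r^\al + C(r+{\rm dist}(x,y))^\al \leq C\,{\rm dist}(x,y)^\al,
$$
since $r\leq 2\,{\rm dist}(x,y)$. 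In the \emph{near} regime ${\rm dist}(x,y) < r/2$, the coordinate ball $B(x,r)$ lies in $M$ and $v := u-\vphi(\xi)$ is $\Delta_g$-harmonic on it; \eqref{eq:holder-boundary} applied pointwise in $B(x,r)$ yields $\sup_{B(x,r)}|v|\leq C(2r)^\al$. The interior estimate then gives $\sup_{B(x,r/2)}|\nabla u|\leq C r^{\al-1}$, so
$$
|u(x)-u(y)| \leq C r^{\al-1}{\rm dist}(x,y) = C\Big(\tfrac{{\rm dist}(x,y)}{r}\Big)^{1-\al}{\rm dist}(x,y)^\al \leq C\,{\rm dist}(x,y)^\al,
$$
using ${\rm dist}(x,y)<r$. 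Combining the two regimes proves \eqref{eq:holder-norm}.

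The argument is essentially routine once \eqref{eq:holder-boundary} is in hand; the only point requiring care is that the interior gradient bound be uniform in the base point, which follows from compactness of $\ov M$ and smoothness of $\om$. The case split at the threshold $r/2$ is the usual trick for turning a Lipschitz-type interior bound (whose constant blows up like $r^{\al-1}$ as $x$ approaches $\d M$) into a global $C^{0,\al}$ estimate, and is the step one has to get right.
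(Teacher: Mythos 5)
Your proof is correct and follows essentially the same approach as the paper: split into a near regime where an interior elliptic estimate on a ball contained in $M$ is combined with the boundary bound \eqref{eq:holder-boundary}, and a far regime where \eqref{eq:holder-boundary} alone controls the oscillation via nearby boundary points. The only differences are cosmetic---you invoke the interior gradient estimate and integrate along a geodesic instead of directly applying the scaled interior H\"older estimate (Gilbarg--Trudinger, Cor.~9.24 and Lem.~8.23) as the paper does, and you set the threshold at ${\rm dist}(x,y)\sim\min\{d_x,d_y\}$ rather than $\max\{d_x,d_y\}$, so your far-regime computation uses a single boundary point rather than two---but both variants are interchangeable and correct.
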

\begin{proof} By maximum principle we get $\inf_{\d M} \vphi \leq u \leq \sup_{\d M}\vphi$. Denote $d_x = {\rm dist}(x, \d M)$ and $d_y = {\rm dist}(y, \d M)$. Suppose that $d_y \leq d_x$, and take $x_0, y_0 \in \d M$ such that ${\rm dist}(x, x_0) = d_x$ and ${\rm dist}(y, y_0) = d_y$.

{\bf Case 1:} ${\rm dist}(x,y) \leq d_x/2$. Since $\De_g$ is uniformly elliptic we have the interior H\"older estimates (see Corollary~9.24 and Lemma~8.23 in \cite{GT98})
$$
	\|u\|_{C^{0,\al}(\ov B_{R/2})} \leq C R^{-\al} \|u\|_{L^\infty(B_R)},
$$
where $B_R \subset M$ is a coordinate ball of small radius $0< R<1$. 
Since $y \in \ov B_{d_x/2} (x) \subset B_{d_x}(x) \subset M$. Applying the interior  inequality to $u-u(x_0)$ in $B_{d_x}(x)$ we get
$$
	d_x^\al \frac{|u(x) - u(y)|}{[{\rm dist}(x,y)]^\al} \leq C \| u - u(x_0)\|_{L^\infty(B_{d_x}(x))}.
$$
By \eqref{eq:holder-boundary} the right hand side is less than $C d_x^{\al}$. It follows that
$$	|u(x) - u(y)| \leq C [{\rm dist}(x,y)]^\al.
$$

{\bf Case 2:} $d_y \leq d_x \leq 2 \,{\rm dist}(x,y)$. Then,
$$\begin{aligned}
	|u(x) - u(y)| 
&\leq 	|u(x) - u(x_0)| + |u(x_0)-u(y_0)| + |u(y)- u(y_0)| \\
&\leq	 	C(d_x^\al + [{\rm dist} (x_0,y_0)]^\al + d_y^\al) \\
&\leq		C [{\rm dist}(x,y)]^\al 
\end{aligned}$$
since ${\rm dist} (x_0 ,y_0) \leq d_x + {\rm dist}(x,y) + d_y \leq 5 \, {\rm dist}(x,y)$.
\end{proof}

%\end{proof}

%\end{proof}

\end{document}